\documentclass[12pt]{article}
\usepackage{amsmath,amssymb,amsfonts,amsrefs,esint,mathtools,bm}
\usepackage[dvipdfmx]{graphicx}
\usepackage{authblk}
\mathtoolsset{showonlyrefs=true}
\numberwithin{equation}{section}

\usepackage{amsthm}

\theoremstyle{plain}
\newtheorem{dfn}{Definition}[section]
\newtheorem{thm}{Theorem}[section]
\newtheorem{thm*}{Theorem}
\newtheorem{prop}{Proposition}[section]
\newtheorem{lem}{Lemma}[section]

\theoremstyle{definition}
\newtheorem{rem}{Remark}[section]

\usepackage{appendix}

\newcommand{\R}{\mathbb{R}}
\newcommand{\N}{\mathbb{N}}
\allowdisplaybreaks

\title {Double phase flow under Lavrentiev phenomenon} 
\author{Yoshiki Kaiho\footnote{\texttt{kaiho.yoshiki.p7@dc.tohoku.ac.jp}}}
\affil{Tohoku University, Sendai, 980-8578, Japan}

\date{\today} 
\begin{document}
\maketitle

\begin{abstract}
This paper deals with parabolic equations associated with double phase functionals.
It is known that double phase functionals may exhibit the Lavrentiev phenomenon, which indicates an existence of a singular minimizer.
Our aim is to investigate the process of the associated double phase flow evolving toward the singular minimizer.
For this purpose, we study whether solutions can be approximated by smooth functions.
We first observe a phenomenon that we call finite-time loss of smooth approximability;
more precisely, we prove that the flow eventually ceases to be smoothly approximable.
We also establish quantitative estimates on the time of loss.
On the other hand, we investigate a phenomenon that we call short-time persistence of smooth approximability;
we prove that the smooth approximability persists for a short time 
provided that the initial datum is regular 
and that the functional is nondegenerate with respect to the gradient variable.
The results concerning finite-time loss of smooth approximability are derived from the evolution variational inequality,
whereas the short-time persistence result is obtained by applying analytic semigroup theory.
The novelty of this paper lies in studying the dynamical aspect of the Lavrentiev phenomenon,
which is usually regarded as a stationary phenomenon. 

\vspace{5pt}
\noindent \textbf{Key words:}\,parabolic double phase equation, Lavrentiev phenomenon \\
\textbf{AMS Classification:}\,35K61, 35D35, 35B65
\end{abstract}

\section{Introduction}\label{sec:intro}

\subsection{Background}

When one considers the minimization of variational functionals, the Lavrentiev phenomenon may occur.
Let
$$
\mathcal{F}(u) = \int_{\Omega} f(x,\nabla u)\,dx,
$$
where $n \in \N$, $\Omega \subset \R^n$ is a bounded domain and $f:\Omega \times \R^n \rightarrow \R$ is a Carathéodory function.
Then, clearly, we have
$$
\inf_{u:\mathrm{all}} \mathcal{F}(u) \le \inf_{u:\mathrm{regular}} \mathcal{F}(u).
$$
In some situations, however, the strict inequality 
$$
\inf_{u:\mathrm{all}} \mathcal{F}(u) < \inf_{u:\mathrm{regular}} \mathcal{F}(u)
$$
holds.
This is called the Lavrentiev phenomenon.
The first example of this phenomenon was given in Lavrentiev's work \cite{La}.
The occurrence of this phenomenon indicates low regularity of minimizers
in the sense that they cannot be approximated by smooth functions in the energy topology.
The purpose of this paper is to provide a dynamical interpretation of the Lavrentiev phenomenon 
by investigating how the associated parabolic flows evolves toward such a singular minimizer.



In this paper, we focus on the following functional, which may exhibit the Lavrentiev phenomenon:
\begin{align}\label{eq:double_phase_energy}
\varphi(u) 
\coloneq \int_{\Omega} \left( \left( |\nabla u|^2 + \mu \right)^{\frac{p}{2}} + a(x)\left(|\nabla u|^2 + \mu \right)^{\frac{q}{2}} \right)\,dx,
\end{align}
where $1 < p < q < \infty$, $a:\Omega \rightarrow [0,\infty)$ is a measurable function and $\mu \ge 0$.
The functional $\varphi$ is commonly referred to as a double phase functional and
is nondegenerate with respect to the gradient variable when $\mu > 0$.
Double phase functionals are often interpreted as models for the elastic energy of composite materials consisting of two phases: a soft phase and a hard phase.
Roughly speaking, for such functionals, if $\frac{q}{p}$ is sufficiently large, the Lavrentiev phenomenon may occur.
Indeed, the following result is known, which was obtained by Zhikov for $n=2$
and Esposito, Leonetti and Mingione for $n \ge 2$ (see \cite{EsLM,Zh}).
\begin{thm}[\cite{EsLM,Zh}]\label{prop:Lav}
Let $n \ge 2$, $\Omega$ be the unit ball of $\R^n$,
$\alpha \in (0,1]$ and $p < n < n + \alpha < q$.
Then there exist a boundary datum $g \in C^{\infty}(\overline{\Omega})$,
a coefficient function $a \in C^{0,\alpha}(\overline{\Omega})$ and a constant $\mu_0 > 0$ such that
if $\mu \in [0,\mu_0]$, then
$$
\inf_{u \in g + W^{1,p}_0(\Omega)} \varphi(u) < \inf_{u \in g + C_c^{\infty}(\Omega)} \varphi(u).
$$
\end{thm}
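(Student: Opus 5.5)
We would follow the classical Zhikov checkerboard construction, in the form of Esposito--Leonetti--Mingione, and separate the two infima with a duality (capacity) argument. Write $x=(x',x_n)$ and let $\mathcal{C}_+$, $\mathcal{C}_-$ be the double cone $\{|x'|<\kappa|x_n|\}$ split into its upper and lower halves. The coefficient $a$ will vanish on a neighbourhood of this double cone and equal $\mathrm{dist}(x,\mathcal{C}_+\cup\mathcal{C}_-)^\alpha$, suitably cut off, elsewhere, so that $a\in C^{0,\alpha}(\overline\Omega)$ and $a(x)\sim|x|^\alpha$ away from the cones.

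\textbf{Step 1: a finite-energy singular competitor.} Take $u_0$ that is $0$-homogeneous near the origin: equal to $1$ on $\mathcal{C}_+$, to $0$ on $\mathcal{C}_-$, and interpolating smoothly in the angular variable in the region where $a>0$. Then $|\nabla u_0|\lesssim |x|^{-1}$ on the support of the interpolation, and $|\nabla u_0|=0$ in the cones. Since $a\,|\nabla u_0|^q\lesssim |x|^{\alpha-q}$ on a region of angular measure comparable to the full sphere, this part is not integrable because $q>n+\alpha$. So the transition must be placed inside the cones, where $a=0$: let $u_0$ jump from $0$ to $1$ across $\mathcal{C}_+\cup\mathcal{C}_-$, be $0$-homogeneous there, and be constant on the complement where $a>0$. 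Then $\int |\nabla u_0|^p\lesssim\int_0^1 r^{-p}r^{n-1}\,dr<\infty$ because $p<n$. Taking $g$ to be a smooth extension of $u_0|_{\partial\Omega}$ gives $\inf_{g+W^{1,p}_0}\varphi(u)\le\varphi(u_0)<\infty$, uniformly for $\mu\le 1$.

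\textbf{Step 2: a lower bound for smooth competitors.} The key point is that $q>n+\alpha$ forces any $u\in g+C^\infty_c$ with $\varphi(u)\le M$ to be essentially continuous on the region $\{a>0\}$, which connects the upper and lower boundary sets across the origin. Concretely, we use a separating functional. Choose a vector field $b$ with $\mathrm{div}\, b=0$ in $\Omega\setminus\{0\}$, supported in $\{a>0\}$, with $|b|\sim|x|^{1-n}$ and nonzero flux through small spheres. Then the Gauss--Green formula gives
\[
\int_\Omega b\cdot\nabla u\,dx=\Phi(g)
\]
for smooth $u$, where the right-hand side depends only on the boundary datum. For the singular $u_0$ the same integral picks up an extra point contribution at the origin, namely the flux times the jump, and we design $b$ so that $\int b\cdot\nabla u_0=0$ while $\Phi(g)=c_0>0$. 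Young's inequality gives $c_0\le \int a|\nabla u|^q+\int a^{-q'/q}|b|^{q'}$, and the last integral is finite because $q'(n-1)+\alpha q'/q<n$, which is equivalent to $q>n+\alpha$. This does not separate the infima by itself, so we argue by contradiction instead. If smooth $u_k$ had $\varphi(u_k)\to\inf_{W}\varphi$, they would be bounded in $W^{1,p}$ and weakly converge to a minimizer $u$. The weighted $q$-bound on $\{a>0\}$, together with Hölder against $b$, passes $\int b\cdot\nabla u_k=c_0$ to the limit, forcing $\int b\cdot\nabla u=c_0$. On the other hand, we show that $W^{1,p}$-minimizers satisfy $\int b\cdot \nabla u=0$ by comparing with $u_0$-type modifications near the origin, which cost arbitrarily little energy since $p<n$ (capacity zero of points). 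This gives the contradiction. The step I expect to be the main obstacle is exactly this one: making the ``$p<n$ allows a cheap jump, $q>n+\alpha$ forbids it for smooth maps'' dichotomy rigorous via a single functional that is continuous under the smooth bound.

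\textbf{Step 3: the $\mu$-dependence.} All bounds above are stable in $\mu$, since $(|\xi|^2+\mu)^{s/2}\le |\xi|^s+C\mu^{s/2}$ and $\ge|\xi|^s$. So the gap $\delta>0$ obtained at $\mu=0$ persists for $\mu\le\mu_0$ once the additive error $C(\mu^{p/2}+\mu^{q/2})|\Omega|$ is below $\delta/2$. This yields the constant $\mu_0$.
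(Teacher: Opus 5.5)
Step~2 has a genuine gap. (The paper gives no proof of this statement: it cites \cite{EsLM,Zh} and remarks that approximation handles smooth $g$ and small $\mu>0$.) Your ingredients are the right ones, but the contradiction argument does not close.

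It rests on the claim that the $W^{1,p}$-minimizer satisfies $\int b\cdot\nabla u=0$. This is unjustified, and in general false. Take $b=h(x/|x|)\,x/|x|^{n}$ with $h$ supported in the cones and the flux through each cone normalized to $c_0$. Integrating along rays gives $\int_\Omega b\cdot\nabla w=c_0\bigl(1-[w]_0\bigr)$ for finite-energy $w\in g+W^{1,p}_0(\Omega)$, where $[w]_0$ is the difference between the limits of $w$ at the origin from the upper and the lower cone. So you are claiming that the minimizer has exactly the jump of $u_0$. That jump is set by a competition between the $p$-energy in $\{a=0\}$ and the weighted $q$-energy in the cones, and nothing pins it at $1$. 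Heuristically, the $p$-flux entering the upper cone must be conducted out to the cap where $g=1$, which keeps $u<1$ in the cone up to the vertex.

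Your capacity heuristic points the wrong way. Changing the limits at $0$ means modifying $w$ inside the cones, where $a\gtrsim|x|^\alpha$. By scaling and the weighted Morrey inequality, a modification supported in $B_r$ that moves such a limit by $\delta$ costs $\gtrsim\delta^q r^{n+\alpha-q}$, which blows up as $r\to0$ because $q>n+\alpha$. Points are negligible only for the $p$-phase.

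The missing idea is a free large parameter. Even granting $\int b\cdot\nabla u=c_0$ for your limit $u$, H\"older gives only $\varphi(u)\ge\int a|\nabla u|^q\ge c_0^qK^{1-q}$ with $K=\int a^{-q'/q}|b|^{q'}$. You would still need $\varphi(u_0)<c_0^qK^{1-q}$, and nothing guarantees that.

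The fix is to replace $a$ by $\lambda a$, which is still $C^{0,\alpha}$. Then $\varphi(u_0)=\int|\nabla u_0|^p$ does not change. Every $u\in g+C_c^\infty(\Omega)$ has $\int b\cdot\nabla u=c_0$, hence $\varphi(u)\ge\lambda c_0^qK^{1-q}$. For $\lambda$ large this separates the infima directly, with no minimizers and no contradiction. Alternatively, scale $g$ and $u_0$ by $t\gg1$ and use $q>p$.

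Step~3 then works, since $\varphi_\mu\ge\varphi_0$ and $\varphi_\mu(u_0)\to\varphi_0(u_0)$ by dominated convergence. Your bound $(|\xi|^2+\mu)^{s/2}\le|\xi|^s+C\mu^{s/2}$ fails for $s>2$, but it is not needed.

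Two repairs to the setup are also required.
\begin{itemize}
\item Your corrected geometry in Step~1 is inconsistent for $n\ge3$. The complement of the double cone is connected, so $u_0$ cannot be constant there and still jump from $0$ to $1$. Step~2 also needs $\{a>0\}$ to join the upper and lower boundary through the origin. Use the standard configuration instead: $a>0$ on the double cone, with $a\simeq|x|^\alpha$ on a smaller cone containing $\mathrm{supp}\,b$, and $a=0$ near the equatorial region, where the $0$-homogeneous transition of $u_0$ is placed.
\item The field $b$ must have zero total flux: flux $c_0$ in through the upper cone and out through the lower one. With nonzero total flux, $\int b\cdot\nabla u$ depends on $u(0)$ and not only on $g$.
\end{itemize}
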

\begin{rem}
This result is usually stated in the degenerate case and for some Lipschitz boundary datum.
By a simple approximation argument, this result holds in the nondegenerate case and for some smooth boundary datum.
\end{rem}
\begin{rem}
In the proof of Theorem\,\ref{prop:Lav}, the condition $p < n$ is essential.
Recently, under the general condition $q > p + \alpha \max\{1,\frac{p-1}{n-1}\}$,
Balci, Diening and Surnachev in \cite[Theorem\,34]{BaDS} gave the new examples of the Lavrentiev phenomenon.
\end{rem}
Since the seminal work of Zhikov\,\cite{Zh},
two directions have emerged in the regularity theory of double phase functionals.
The first concerns the singular minimizers.
In this direction, together with \cite{BaDS,EsLM,Zh} already mentioned,
we refer the reader to \cite{BaDS_1,FoMM}.
The second concerns the regularity of the minimizers and stationary solutions under the sufficient condition excluding the Lavrentiev gap.
This includes higher integrability \cite{EsLM}, H\"{o}lder continuity for the gradient \cite{CoM1,CoM2,BaCM} and Calder\'{o}n--Zygmund estimates \cite{CoM3,DeFM0}.
More recently, the regularity results for the parabolic double phase equations have begun to develop.
For example, higher integrability \cite{Si,KiKM,KiS}, Lipschitz bound \cite{DeF}, Calder\'{o}n--Zygmund estimate \cite{Ki2} are available under assumptions analogous to those in the stationary case.
In contrast to the previous studies,
we are interested in the process of solutions to parabolic double phase equations evolving toward singular minimizers in the presence of the Lavrentiev phenomenon.

\subsection{Formulation}
In this paper, we consider the following parabolic equation driven by the double phase functional: 
\begin{equation}\label{eq:main}
\left\{
\begin{aligned}
\partial_t u = \mathrm{div}\left\{ p(|\nabla u|^2 + \mu)^{\frac{p-2}{2}}\nabla u + qa(x)(|\nabla u|^2 + \mu)^{\frac{q-2}{2}}\nabla u \right\}
\quad &\text{in}\,\Omega \times (0,\infty),\\
u = g \quad &\text{on}\, \partial\Omega \times (0,\infty),\\
u = u_0\quad &\text{in}\,\Omega,
\end{aligned}
\right.
\end{equation}
where $\Omega \subset \R^n\,(n \ge 2)$ is a bounded domain,
$1 < p < q < \infty$, $\mu \in [0,1]$ 
and $0 \le a \in C^{0,\alpha}(\overline{\Omega})\,(\alpha \in (0,1])$.
Here $C^{0,\alpha}(\overline{\Omega})$ is the space of $\alpha$-Holder continuous functions on $\overline{\Omega}$.
Moreover, $u_0$ is the initial datum and $g$ is the time-independent boundary datum.

Let us assume
\begin{align}\label{assmpt:boundary}
  g \in L^2(\Omega) \cap W^{1,D}(\Omega),
\end{align}
and
\begin{align}\label{eq:expo_cond}
\left[p < n
\quad
\text{and}
\quad
q \le \frac{(n+\alpha)p}{n-p}\right]
\quad
\text{or}
\quad
p \ge n.
\end{align}
Here $W^{1,D}(\Omega)$ is the generalized Orlicz space, consisting of $W^{1,p}$-functions with the finite double phase energy (see Subsection\,2.1, for the precise definitition).
Under these two assumptions, the general theory of Brezis \cite{Br_0} yields the global well-posedeness of \eqref{eq:main} in the class of $L^2$-solutions;\,see Subsection\,2.1.
\begin{rem}
One can choose $p,q,\alpha$ so that $p < n < n + \alpha < q$ and $q \le \frac{(n + \alpha)p}{n-p}$. 
Hence, there are parameter regimes in which the Lavrentiev phenomenon occurs while the global well-posedness holds;\,see Theorem\,\ref{prop:Lav}.
\end{rem}

Let us denote by $S(t)u_0$ the unique $L^2$-solution to \eqref{eq:main} with initial datum $u_0$ (see also Remark\,\ref{rem:contraction}),
and let
$$
X \coloneq g+\overline{C_c^\infty(\Omega)}^{W^{1,D}(\Omega)}
$$
be the affine space obtained by translating the closure of $C_c^\infty(\Omega)$ in $W^{1,D}(\Omega)$ by $g$.
Suppose the Lavrentiev phenomenon occurs. 
Then the minimizer in the larger admissible class does not belong to $X$.
On the other hand, for any suitable initial datum $u_0$, 
the $L^2$-solution $S(t)u_0$ evolves toward the singular minimizer
because of the strict convexity of the double phase functional.
Here we investigate the evolution of the flow $S(t)u_0$,
starting from a regular initial datum $u_0$, 
from the following two perspectives:
\begin{itemize}
  \item whether $S(t)u_0$ eventually ceases to belong to $X$;
  \item whether $S(t)u_0$ remains in $X$ for some initial time interval.
\end{itemize}
We refer to the former phenomenon as finite-time loss of smooth approximability 
and to the latter as short-time persistence of smooth approximability. 

The novelty of this paper is to capture the dynamical aspect of the Lavrentiev phenomenon,
which is commonly seen as a stationary problem, 
from the viewpoints of finite-time loss of smooth approximability and short-time persistence of smooth approximability.

\subsection{Main result}
Let $\varphi$ be the double phase functional defined in \eqref{eq:double_phase_energy}.
We first state results about the finite-time loss of smooth approximability.
Assume that
\begin{align}\label{assmpt:2.2.1}
  p \ge \frac{2n}{n+2},
\end{align}
and
\begin{align}\label{assmpt:Lav}
\inf_{v \in g + W^{1,D}_0(\Omega)} \varphi(v) < \inf_{v \in X} \varphi(v).
\end{align}
The assumption \eqref{assmpt:2.2.1} is used to obtain the embedding $W^{1,D}(\Omega) \subset L^2(\Omega)$;
it can be replaced by the boundedness of $g$, as explained in Remark\,\ref{rem:boundedness}.
And the assumption \eqref{assmpt:Lav} is precisely the occurrence of the Lavrentiev phenomenon.
Moreover, let $u_{min} \in g + W^{1,D}_0(\Omega)$ and $u_* \in X$ be such that
$$
\varphi(u_{min}) = \inf_{v \in g + W^{1,D}_0(\Omega)} \varphi(v)
\quad
\text{and}
\quad
\varphi(u_*) = \inf_{v \in X} \varphi(v).
$$
The existence and uniqueness of $u_{min}$ and $u_*$ follow from the direct method and the strict convexity of $\varphi$.

Under these assumptions, we obtain the following result.
\begin{thm}\label{thm:fti}
Assume \eqref{assmpt:boundary}, \eqref{eq:expo_cond}, \eqref{assmpt:2.2.1} and \eqref{assmpt:Lav}.
\begin{itemize}
\item[\rm{(i)}] For any $u_0 \in X$, it holds that
\begin{align}
      S(t)u_0 \not \in X
      \quad
      \text{if}
      \quad
      t > \frac{\|u_{min} - u_0\|^2_{L^2(\Omega)}}{2\delta},
\end{align}
where $\delta \coloneq \varphi(u_*) - \varphi(u_{min})(>0)$.
  \item[\rm{(ii)}] For any $\varepsilon > 0$, there exists $\eta > 0$ depending on $\varepsilon$ such that
if $u_0 \in X$ satisfies $\|u_* - u_0\|_{L^2(\Omega)} < \eta$, then it holds that
$$
S(t)u_0 \not \in X
\quad
\text{if}
\quad
t > \varepsilon.
$$
In particular, it follows that
$$
S(t)u_* \not \in X
\quad
\text{if}
\quad
t > 0.
$$
\end{itemize}
\end{thm}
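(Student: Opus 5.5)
The plan is to derive both statements from the evolution variational inequality (EVI) that Brezis' theory provides for the gradient flow $S(t)$ of the convex, lower semicontinuous functional $\varphi$ on $L^2(\Omega)$. Here $\varphi$ is extended by $+\infty$ outside $g+W^{1,D}_0(\Omega)$, and \eqref{assmpt:2.2.1} is what puts the effective domain inside $L^2$. For every $v$ in the domain and a.e.\ $t>0$ we have
\begin{align}
\frac12\frac{d}{dt}\|S(t)u_0 - v\|_{L^2(\Omega)}^2 + \varphi(S(t)u_0) \le \varphi(v).
\end{align}
Since $t\mapsto \varphi(S(t)u_0)$ is nonincreasing, integrating over $(0,t)$ gives the standard estimate
\begin{align}
\varphi(S(t)u_0) - \varphi(v) \le \frac{\|u_0 - v\|_{L^2(\Omega)}^2}{2t}, \qquad t>0.
\end{align}
The central observation is that whenever $S(t)u_0 \in X$, the definition of $u_*$ forces $\varphi(S(t)u_0) \ge \varphi(u_*)$.

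\emph{Part (i).} Take $v=u_{min}$. If $S(t)u_0\in X$, then
\begin{align}
\delta = \varphi(u_*)-\varphi(u_{min}) \le \varphi(S(t)u_0)-\varphi(u_{min}) \le \frac{\|u_0-u_{min}\|_{L^2(\Omega)}^2}{2t},
\end{align}
so $t\le \|u_{min}-u_0\|_{L^2(\Omega)}^2/(2\delta)$. This is exactly the contrapositive of (i). Here $u_0\in X$ guarantees $\varphi(u_0)<\infty$, so $S(t)u_0$ is a strong solution and the EVI applies down to $t=0$. Strictly, the estimate holds for general $u_0$ in the closure of the domain, but we will not need that.

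\emph{Part (ii).} Here the threshold must be made small, and I would use the monotonicity of $\varphi$ along the flow rather than a bound on $\|u_0-u_{min}\|$ alone. Suppose $S(t)u_0\in X$ for some $t>\varepsilon$. Monotonicity gives $\varphi(S(s)u_0)\ge\varphi(u_*)$ for all $s\le t$, in particular on $[0,\varepsilon]$.

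Now compare with the flow started at $u_*$. By the $L^2$-contraction property (Remark \ref{rem:contraction}), $\|S(s)u_0-S(s)u_*\|_{L^2}\le \eta$. Applying the integrated estimate to the flow from $u_*$ at time $\varepsilon$ gives
\begin{align}
\varphi(S(\varepsilon)u_*)\le \varphi(u_{min})+\frac{\|u_*-u_{min}\|^2}{2\varepsilon}.
\end{align}
This is not yet below $\varphi(u_*)$ for small $\varepsilon$, so the key step is different: show directly that $S(s)u_*$ leaves $X$ immediately.

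A cleaner route is to apply the EVI with $v=u_{min}$ at $t=0^+$. For the flow from $u_*$, one has
\begin{align}
\frac{d}{dt}\Big|_{t=0^+}\tfrac12\|S(t)u_*-u_{min}\|^2 \le -\delta<0.
\end{align}
Next, expand $\|S(t)u_0 - u_{min}\|^2$ to first order. Assuming $S(s)u_0\in X$ on $[0,t]$, integrating the EVI with $v=u_{min}$ gives
\begin{align}
\|S(t)u_0-u_{min}\|^2 \le \|u_0-u_{min}\|^2 - 2\delta t.
\end{align}
The reverse inequality is needed to conclude, and this is the main obstacle. I would handle it by arguing by contradiction with a sequence. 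Suppose $u_0^k\to u_*$ in $L^2$, $u_0^k\in X$, and $t_k>\varepsilon$ with $S(t_k)u_0^k\in X$. Then $S(s)u_0^k\in X$ for all $s\le \varepsilon$ with $\varphi\ge\varphi(u_*)$.

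Passing to the limit using contraction and the lower semicontinuity of $\varphi$ requires preserving the bound $\varphi(S(s)u_*)\ge\varphi(u_*)$, which lower semicontinuity does not give directly. Instead, I would use the closedness of $X$ in a weak topology: $X$ is convex and closed in $W^{1,D}$, hence weakly closed. Combined with the uniform energy bound $\varphi(S(s)u_0^k)\le \varphi(u_{min})+C/s$, weak compactness gives $S(s)u_*\in X$ for every $s\in(0,\varepsilon]$. Then $\varphi(S(s)u_*)\ge\varphi(u_*)$ on $(0,\varepsilon]$.

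On the other hand, $\varphi(S(s)u_*)\le\varphi(u_*)$ by monotonicity, so $\varphi(S(s)u_*)=\varphi(u_*)$ is constant on $[0,\varepsilon]$. The energy identity $\int_0^\varepsilon\|\partial_t S(s)u_*\|^2\,ds=\varphi(u_*)-\varphi(S(\varepsilon)u_*)=0$ then forces $S(s)u_*=u_*$, i.e.\ $0\in\partial\varphi(u_*)$. That would make $u_*$ the global minimizer $u_{min}$, contradicting $\delta>0$. The "in particular" statement is the special case $u_0=u_*$ of this same argument.
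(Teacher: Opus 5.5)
Part (i) is the same as the paper's argument. Part (ii) has a genuine gap at the step ``Then $S(s)u_0^k\in X$ for all $s\le\varepsilon$.'' From $S(t_k)u_0^k\in X$ and monotonicity of the energy you only get the \emph{energy} bound $\varphi(S(s)u_0^k)\ge\varphi(u_*)$ for $s\le t_k$. Membership in $X$ does not propagate backward in time: nothing shown rules out the flow leaving $X$ and re-entering it later. The energy bound also does not imply membership; for instance $u_{min}+h$ with $h\in C_c^\infty(\Omega)$ of large gradient lies outside $X$ but has energy above $\varphi(u_*)$. Your weak-closedness step requires $S(s)u_0^k\in X$, so it does not apply. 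As you note yourself, the energy bound alone cannot be passed to the limit by lower semicontinuity. What is missing is an \emph{upper} bound on $\varphi(S(s)u_0)$ in terms of the flow from $u_*$.

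The paper supplies exactly this by a time-shifted comparison. Apply \eqref{eq:evi_int} with $\tau=\varepsilon/2$, $v=S(\varepsilon/2)u_0$, $w=S(\varepsilon/2)u_*$, and then the contraction property, to get
\[
\varphi(S(\varepsilon)u_0)\le\varphi\bigl(S(\varepsilon/2)u_*\bigr)+\frac{\|u_0-u_*\|_{L^2(\Omega)}^2}{\varepsilon}.
\]
Strict decrease $\varphi(S(\varepsilon/2)u_*)<\varphi(u_*)$, which your energy-identity argument proves correctly, then gives the explicit choice $\eta=\sqrt{\varepsilon\bigl(\varphi(u_*)-\varphi(S(\varepsilon/2)u_*)\bigr)}$.

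Alternatively, your compactness route can be repaired by running it at the times $t_k$ themselves, where membership in $X$ does hold. By part (i), $t_k\le\|u_{min}-u_0^k\|_{L^2(\Omega)}^2/(2\delta)$, which is bounded, so along a subsequence $t_k\to t_\infty\in[\varepsilon,\infty)$. Contraction plus time continuity give $S(t_k)u_0^k\to S(t_\infty)u_*$ in $L^2(\Omega)$. The energies satisfy $\varphi(S(t_k)u_0^k)\le\varphi(u_{min})+\|u_0^k-u_{min}\|^2_{L^2(\Omega)}/(2\varepsilon)$, which is bounded. The coercivity estimate from the proof of Lemma \ref{lem:lsc}, reflexivity, and weak closedness of $X$ then yield $S(t_\infty)u_*\in X$, contradicting strict decrease. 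This version yields only a non-explicit $\eta$, whereas the paper's gives a formula.
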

\begin{rem}
Theorem\,\ref{thm:fti}\,(i) asserts that,
for any initial datum in $X$,
the flow $S(t)u_0$ eventually ceases to $X$. 
Moreover, it shows that the upper bound on the time of loss decreases as $u_0$ approaches $u_{min}$ in $L^2(\Omega)$.
The assertion \rm{(ii)} further shows that $u_* \in X$ plays a role analogous to that of $u_{min}$: 
if the initial datum is sufficiently close to $u_*$ in $L^2(\Omega)$, 
the loss of smooth approximability occurs arbitrarily early.
\end{rem}

Next we turn our attention to short-time persistence of smooth approximation.
Let us suppose the following condition:
\begin{align}\label{assmpt:3.1}
\mu > 0,
\quad
\gamma \in (0,1),
\quad
\partial\Omega \in C^{3,\gamma},
\quad
g \in C^{1,\gamma}(\overline{\Omega}).
\end{align}
The assumption $\mu > 0$ ensures that the operator in \eqref{eq:main} is nondegenerate and sufficiently smooth with respect to the gradient variable.
Then we obtain the following theorem.
\begin{thm}\label{thm:ftpr}
Assume \eqref{eq:expo_cond} and \eqref{assmpt:3.1}.
Let $u_0 \in C^{1,\gamma}(\overline{\Omega})$ with $u|_{\partial \Omega} = g$ and $S(t)u_0$ is the unique $L^2$-soluiton of \eqref{eq:main}.
Then there exists $T(u_0) > 0$, depending on $\mu$ in particular, such that
$$
S(t)u_0 \in X 
\quad
\text{if}
\quad
0 \le  t  \le T(u_0).
$$
\end{thm}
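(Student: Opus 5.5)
The plan is to build a classical solution on a short time interval using analytic semigroup theory, show it lies in $X$, and then identify it with $S(t)u_0$ by uniqueness of $L^2$-solutions.

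\emph{Reduction and linearization.} Extend $g$ (or use $u_0$ itself) to reduce to homogeneous boundary data: write $u = u_0 + w$ with $w|_{\partial\Omega}=0$ and $w(0)=0$. The equation becomes $\partial_t w = A(u_0+w)$, where
$$
A(v)=\mathrm{div}\,\{F(x,\nabla v)\},\qquad F(x,\xi)=p(|\xi|^2+\mu)^{\frac{p-2}{2}}\xi+qa(x)(|\xi|^2+\mu)^{\frac{q-2}{2}}\xi .
$$
Since $\mu>0$, the map $\xi\mapsto F(x,\xi)$ is $C^\infty$. Its Jacobian $D_\xi F(x,\xi)$ is positive definite, locally uniformly in $\xi$ (ellipticity constant $\gtrsim \min\{1,\mu^{(p-2)/2}\}$ on bounded sets of $\xi$), and is $\alpha$-Hölder in $x$ through $a$. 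I would work in the little Hölder framework: choose $0<\beta<\min\{\gamma,\alpha\}$, take the base space $E_0=h^{0,\beta}(\overline\Omega)$ and $E_1=h^{2,\beta}(\overline\Omega)\cap\{v|_{\partial\Omega}=0\}$. In nondivergence form the operator is $D_\xi F(x,\nabla v):\nabla^2 v + (\partial_x F)(x,\nabla v)$. The trouble is that $\partial_x F$ involves $\nabla a$, which need not exist because $a$ is only Hölder. I would therefore treat the problem in divergence form. Concretely, use the Lunardi-type theory of fully nonlinear equations in the interpolation scale $E_0=h^{-1+\beta}$-type spaces, or equivalently take $E_1= h^{1+\beta}_0$ and $E_0$ its divergence-dual space. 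In that setting $v\mapsto \mathrm{div}F(x,\nabla v)$ is $C^1$ from an open set of $h^{1+\beta}$ into a space of distributions of order $-1+\beta$. Its Fréchet derivative at $v$ is the divergence-form operator $\mathrm{div}(D_\xi F(x,\nabla v)\nabla\cdot)$ with $C^{0,\beta}$ coefficients. By Schauder theory for divergence-form operators (Giaquinta–Morrey; here the boundary regularity $C^{3,\gamma}$ is more than enough), this derivative generates an analytic semigroup with the appropriate maximal regularity.

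\emph{Local existence.} Since $u_0\in C^{1,\gamma}$ with $\gamma>\beta$, the datum lies in the continuous interpolation space required by the Da Prato–Grisvard/Lunardi theorem. That theorem then gives $T(u_0)>0$ and a solution $u\in C([0,T];C^{1,\beta}(\overline\Omega))$ solving \eqref{eq:main} in the weak sense, with $u=g$ on $\partial\Omega$. The time $T$ depends on $\mu$ through the ellipticity and smoothness constants of $F$ near $\nabla u_0$. I expect this step to be the main obstacle. The precise functional setting (divergence form with merely Hölder $a$, dual Hölder spaces, compatibility of the initial datum, nonhomogeneous boundary data) must be chosen so that the abstract theorem applies verbatim. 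An alternative would be to approximate $a$ by smooth $a_k$, obtain solutions on a uniform time interval with uniform $C^{1,\beta}$ bounds via Schauder estimates, and pass to the limit.

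\emph{Membership in $X$ and identification.} For $t\le T$, $u(t)-g\in C^{1,\beta}(\overline\Omega)\cap\{0 \text{ on }\partial\Omega\}\subset W^{1,\infty}_0(\Omega)$. Mollify after a slight inward shrinking, or use the standard density of $C_c^\infty$ in $W_0^{1,\infty}$ for the modular when gradients are bounded, to get $\phi_k\in C_c^\infty(\Omega)$ with $\nabla\phi_k\to\nabla(u-g)$ a.e. and uniformly bounded. Since $a$ is bounded and $g\in C^{1,\gamma}$, dominated convergence gives $\phi_k\to u(t)-g$ in $W^{1,D}$, so $u(t)\in X$. Finally, $u$ is Lipschitz in time into $L^2$ on $[\delta,T]$ (from regularity) and continuous at $0$. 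It satisfies $-\partial_t u\in\partial\varphi(u)$, because for smooth-gradient $u$ the weak formulation tested against $v-u$ with $v\in g+W^{1,D}_0$ yields the subdifferential inequality by convexity. Uniqueness of $L^2$-solutions (Brezis, Subsection 2.1) then gives $u(t)=S(t)u_0$ on $[0,T]$.
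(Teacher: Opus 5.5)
Your proposal is correct and follows essentially the paper's route. You linearize in divergence form on the scale $\{v\in C^{1,\beta}(\overline{\Omega}): v|_{\partial\Omega}=0\}\subset C^{-1,\beta}(\overline{\Omega})$ and get a local solution from analytic semigroup generation plus Lunardi's fully nonlinear existence theorem; your little H\"older choice, with $f(\cdot,\nabla u_0)\in C^{0,\min\{\alpha,\gamma\}}\subset h^{0,\beta}(\overline{\Omega})$, plays the role of the paper's Lemma~\ref{lem:density} for the compatibility condition. You then note $u(t)\in C^{1,\beta}(\overline{\Omega})\subset X$ and identify $u$ with $S(t)u_0$ by uniqueness of $L^2$-solutions.

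Two steps you leave loose are filled in the paper as follows. First, generation comes from Vespri's resolvent estimates in $C^{-1,\beta}$, not from elliptic Schauder theory alone. Second, $\partial_t u\in L^\infty_{loc}((0,T];L^2(\Omega))$ (your ``from regularity'') comes from $v'\in B_\theta((0,T];(X_0,X_1)_{\theta,\infty})$ together with Theorem~\ref{thm:interpolation}, which for $\frac{1-\beta}{2}<\theta<1-\frac{\beta}{2}$ identifies this interpolation space with H\"older functions vanishing on $\partial\Omega$, hence a subspace of $L^2(\Omega)$.
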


\begin{rem}
It is an interesting problem whether the same result holds as Theorem\,\ref{thm:ftpr} in the degenerate case. 
We also remark that the degenerate case cannot be obtained simply 
by passing to the limit ($\mu \to 0$) because the existence time depends on $\mu$ 
in such a way that $T(u_0) \rightarrow 0$ as $\mu \rightarrow 0$.
\end{rem}

\subsection{Methods and organization}
Let us explain the strategy for proving Theorem\,\ref{thm:fti} and \ref{thm:ftpr}. 
For Theorem\,\ref{thm:fti}, we use the energy decreasing property of the $L^2$-solution.
By this property, under the Lavrentiev phenomenon \eqref{assmpt:Lav}, 
we can find a sufficiently large time $T$ 
such that
$\varphi(u_{min}) \le \varphi(S(t)u_0) < \varphi(u_*)$ for any $t \ge T$.
And this implies $\varphi(S(t)u_0) \not\in X$ for any $t \ge T$.
To obtain a quantitative estimate for such a time $T$, 
we further utilize the evolution variational inequality \eqref{eq:evi};
see also \cite{AmGS} for a general overview.

In order to prove Theorem\,\ref{thm:ftpr},
we first construct a sufficiently regular local-in-time solution $u(t)$,
which satisfies $u(t) \in X$ in the existence time.
And we identify it with the $L^2$-solution.
For the construction of such a regular solution,
we restrict ourselves in the nondegenerate case\,($\mu > 0$).
First note that the standard existence result for quasilinear parabolic equations,
such as those in \cite[Chapter\,V--VI]{La},
cannot be applied directly even if $\mu > 0$.
This is because of non-standard growth which comes from the degeneracy of the coefficient function $a$.
One might try to remove this degeneracy by replacing $a$ with $a(x) + \varepsilon$.
However, this approximation does not seem to be effective in the present setting. 
Indeed, the passage to the limit would require suitable a priori estimates independent of $\varepsilon$,
and the existence of such estimates would be incompatible with the occurrence of the Lavrentiev phenomenon.
For this reason, we adopt the analytic semigroup approach, which is flexible enough to handle this type of growth (see \cite{Am1,Lu0,MaW,PrS} and the references therein).
Combining the analytic semigroup generation result in the space $C^{-1,\beta}(\overline{\Omega})$\,\cite{Ve} with the local existence result for abstract fully nonlinear equations\,\cite{Lu0}, 
we can construct such a local-in-time smooth solution.
We identify this solution with the $L^2$-solution by uniqueness.

Finally we state the plan of this paper.
In Section\,2, we show the global well-posedness of \eqref{eq:main} in the framework of $L^2$-solutions
and then we prove Theorem\,\ref{thm:fti}.
In Section\,3, we construct a relatively regular local-in-time solution
and identify it with the $L^2$-solution.

\section{Finite-time loss of smooth approximability}\label{sec:2}
\subsection{Global well-posedness of $L^2$-solution}
In this subsection, we prove the global well-posedness of \eqref{eq:main} in the class of $L^2$-solutions,
based on the general theory developed in \cite{Br_0}.
For this purpose we adapt the argument of \cite{AkM}, where the $p(x)$-laplacian is treated.

We first recall the definitions and properties of generalized Orlicz spaces associated with double phase functionals.
For more details in this topic, we refer to the book \cite{HaH}.
Let us define the double phase modular $\rho$ by
$$
\rho(u) 
\coloneq \int_{\Omega} \left( |u|^p + a(x)|u|^{q} \right)\,dx
$$
for any measurable function $u$.
Using the modular $\rho$, we define the associated generalized Orlicz space by
$$
L^{D}(\Omega) \coloneqq \left\{u:\Omega \rightarrow \R: measurable : \rho(u) < \infty \right\}
$$
endowed with the Luxemburg norm
$$
\|u\|_{L^D(\Omega)} = \inf\left\{\lambda > 0: \rho \left( \frac{u}{\lambda} \right) \le 1\right\}.
$$
Recall the following relationship between $L^{D}$-norm and the modular $\rho$ (\cite[Lemma\,3.2.9]{HaH}).
\begin{prop}\label{prop:eqiv_norm_modular}
It holds that 
$$
  \min\left\{\|u\|_{L^{D}(\Omega)}^{p},\|u\|_{L^{D}(\Omega)}^{q}\right\}
  \le \rho(u)
  \le \max\left\{\|u\|_{L^{D}(\Omega)}^{p},\|u\|_{L^{D}(\Omega)}^{q}\right\}
$$
for any $u \in L^{D}(\Omega)$.
\end{prop}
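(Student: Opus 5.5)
The plan is to derive both inequalities from the monotonicity of the modular along dilations. Set $\lambda = \|u\|_{L^D(\Omega)}$ and dispose of the trivial case $\lambda = 0$ first: then $\rho(u/\varepsilon) \le 1$ for every $\varepsilon > 0$, which forces $u = 0$ a.e., so every term vanishes. For $\lambda > 0$, the argument rests on the homogeneity structure of $\rho$. If $s \ge 1$, then
$$
s^p \rho(v) \le \rho(sv) \le s^q \rho(v),
$$
because $|sv|^p = s^p|v|^p$, $a|sv|^q = s^q a|v|^q$ and $s^p \le s^q$. If $s \le 1$, the reverse chain $s^q\rho(v) \le \rho(sv) \le s^p\rho(v)$ holds.

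The key intermediate claim is that $\rho(u/\lambda) = 1$. For $\lambda' > \lambda$, the definition of the infimum combined with monotonicity of $t \mapsto \rho(u/t)$ gives $\rho(u/\lambda') \le 1$. Letting $\lambda' \downarrow \lambda$, monotone convergence yields $\rho(u/\lambda) \le 1$. For the reverse inequality, suppose $\rho(u/\lambda) < 1$. Then $\rho(u/\lambda) < \infty$, and by the scaling bound $\rho(su/\lambda) \le s^q \rho(u/\lambda) < 1$ for $s > 1$ close to $1$. This contradicts the minimality of $\lambda$. Hence $\rho(u/\lambda) = 1$. This is the only delicate step: it relies on $\rho(u/\lambda)$ being finite, which we have just established, and on the right continuity coming from the polynomial scaling.

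It then remains to write $\rho(u) = \rho(\lambda \cdot u/\lambda)$ and apply the scaling inequalities with $s = \lambda$ and $v = u/\lambda$, using $\rho(v) = 1$. If $\lambda \ge 1$, we obtain $\lambda^p \le \rho(u) \le \lambda^q$. If $\lambda \le 1$, we obtain $\lambda^q \le \rho(u) \le \lambda^p$. In both cases this is exactly
$$
\min\{\lambda^p,\lambda^q\} \le \rho(u) \le \max\{\lambda^p,\lambda^q\}.
$$
The statement is also a standard result that can be cited, as the paper does from \cite[Lemma\,3.2.9]{HaH}.
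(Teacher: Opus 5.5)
Your proof is correct and complete. The one step you leave implicit is that $\lambda=\|u\|_{L^D(\Omega)}<\infty$. This follows from $\rho(u)<\infty$ and the scaling bound $\rho(u/t)\le t^{-p}\rho(u)\le 1$ for $t\ge\max\{1,\rho(u)^{1/p}\}$.

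The paper gives no argument of its own. It simply cites \cite[Lemma\,3.2.9]{HaH}, which is stated for general $\Phi$-functions satisfying increasing/decreasing-type growth conditions. Your argument combines two ingredients:
\begin{itemize}
\item the two-sided scaling $s^p\rho(v)\le\rho(sv)\le s^q\rho(v)$ for $s\ge1$, reversed for $s\le1$;
\item the unit-ball identity $\rho(u/\|u\|_{L^D(\Omega)})=1$.
\end{itemize}
This is the standard mechanism behind that cited result. You make it self-contained by checking the scaling directly for the double phase modular, so the proposal follows the same approach, just written out explicitly.
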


We next define the Sobolev space $W^{1,D}(\Omega)$ as follows:
$$
W^{1,D}(\Omega) \coloneqq \left\{u \in L^{D}(\Omega) \cap W^{1,p}(\Omega): 
\nabla u \coloneq \left(D_i u \right)_{1 \le i \le n} \in (L^{D}(\Omega))^n \right\}
$$
with the norm
$$
\|u\|_{W^{1,D}(\Omega)} = \|u\|_{L^{D}(\Omega)} + \|\nabla u\|_{L^{D}(\Omega)},
$$
where $\|\nabla u\|_{L^{D}}$ denotes the $L^{D}(\Omega)$-norm of $|\nabla u|$.

Thanks to \cite[Theorem\,6.1.4]{HaH}, the following proposition holds.
\begin{prop}\label{prop:reflexive}
If $a \in L^{\infty}(\Omega)$, then $L^{D}(\Omega)$ and $W^{1,D}(\Omega)$ are uniformly convex. Hence they are reflexive.
\end{prop}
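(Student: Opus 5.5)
The plan is to deduce Proposition\,\ref{prop:reflexive} from the abstract theory of generalized Orlicz spaces. We check that the double phase function $\Phi(x,t) \coloneq t^p + a(x)t^q$ meets the hypotheses of \cite[Theorem\,6.1.4]{HaH}, and then pass from $L^{D}(\Omega)$ to $W^{1,D}(\Omega)$ by a closed-subspace argument.

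\emph{Step 1: hypotheses on $\Phi$.} We verify that $\Phi$ is a (weak) $\Phi$-function and that it satisfies $\Delta_2$ and $\nabla_2$, or equivalently is comparable to a uniformly convex $\Phi$-function. For $\Delta_2$, note that $\Phi(x,2t) \le 2^q\Phi(x,t)$. For $\nabla_2$, note that $t \mapsto \Phi(x,t)/t^{p}$ is nondecreasing, so $\Phi(x,t)$ is almost increasing after division by $t^{p}$ with $p>1$; this is the condition $(\mathrm{aInc})_p$. Moreover, $\Phi(x,\cdot)$ is itself uniformly convex, being the sum of the uniformly convex function $t^p$ ($p>1$) and the convex function $a(x)t^q$. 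The assumption $a \in L^\infty(\Omega)$ ensures local integrability of $\Phi(\cdot,1)=1+a$, so that $L^{D}(\Omega)$ is a well-behaved Banach function space with simple functions of finite modular. Hence \cite[Theorem\,6.1.4]{HaH} gives that $L^{D}(\Omega)$, with the Luxemburg norm, is uniformly convex. If the norm were only equivalent to a uniformly convex one, we would use that reflexivity is invariant under equivalent norms.

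\emph{Step 2: passage to $W^{1,D}(\Omega)$.} Consider the map $u \mapsto (u, D_1u,\dots,D_nu)$ from $W^{1,D}(\Omega)$ into the product $(L^{D}(\Omega))^{n+1}$. This map is an isometric embedding once the product carries a suitable norm, and its image is closed. Closedness follows because convergence in $L^{D}$ implies convergence in $L^p$ (since $\rho(u) \ge \int|u|^p$), so weak derivatives pass to the limit, and $W^{1,p}$ membership is preserved. A closed subspace of a reflexive space is reflexive, and finite products of uniformly convex spaces are uniformly convex under an $\ell^2$-type product norm. The sum norm $\|u\|_{L^D}+\|\nabla u\|_{L^D}$ used in the paper is equivalent to such a norm, so $W^{1,D}(\Omega)$ is reflexive. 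For literal uniform convexity we would renorm as $(\|u\|^2_{L^D}+\|\nabla u\|_{L^D}^2)^{1/2}$, treating $|\nabla u|$ via the vector-valued version in \cite{HaH}.

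\emph{Main obstacle.} The only delicate point is matching the precise hypotheses of \cite[Theorem\,6.1.4]{HaH}, in particular the uniform convexity of the vector-valued norm $\||\nabla u|\|_{L^D}$. I would handle this by noting that $\xi\mapsto\Phi(x,|\xi|)$ is uniformly convex on $\R^n$, which follows from uniform convexity of $|\xi|^p$, and then rerunning the scalar proof of \cite{HaH} for it.
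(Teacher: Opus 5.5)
Your argument follows the paper's route. The paper just invokes \cite[Theorem\,6.1.4]{HaH}; you check its hypotheses ($(\mathrm{aInc})_p$ with $p>1$, $(\mathrm{aDec})_q$, boundedness of $a$) and reproduce the closed-subspace argument that passes from $L^{D}(\Omega)$ to $W^{1,D}(\Omega)$. The reflexivity conclusion, which is all that is used later (Lemma\,\ref{lem:lsc}), is correctly justified. One inference in the uniform convexity part is not.

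\textbf{The uniform convexity step.} You say $\Phi(x,\cdot)$ is uniformly convex because it is ``uniformly convex $t^p$ plus convex $a(x)t^q$''. That does not follow. The notion that yields uniform convexity of the Luxemburg norm is relative: $\Phi\bigl(x,\tfrac{s+t}{2}\bigr)\le(1-\delta)\tfrac{\Phi(x,s)+\Phi(x,t)}{2}$ whenever $|s-t|>\varepsilon\max\{s,t\}$, with $\delta=\delta(\varepsilon)$ independent of $x$.
\begin{itemize}
\item Since the $q$-term dominates for large $t$, the gain $\delta_p\tfrac{s^p+t^p}{2}$ coming from the $p$-term becomes negligible relative to $\tfrac{\Phi(x,s)+\Phi(x,t)}{2}$. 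Replacing $t^q$ by its piecewise linear interpolant at the points $2^k$ (still convex, same growth) destroys uniform convexity.
\item The correct reason is that $t^q$ is itself uniformly convex ($q>1$), and the factor $a(x)\ge0$ does not change the relative constant. So $\delta=\min\{\delta_p(\varepsilon),\delta_q(\varepsilon)\}$ works for every $x$.
\item The same correction applies to your vector-valued remark: it needs both $|\xi|^p$ and $|\xi|^q$, not just $|\xi|^p$.
\end{itemize}

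\textbf{Your ``main obstacle'' can be bypassed.} Embed componentwise into $(L^{D}(\Omega))^{n+1}$ and use the norm $\bigl(\|u\|_{L^D}^2+\sum_i\|D_iu\|_{L^D}^2\bigr)^{1/2}$. It is uniformly convex as the restriction of an $\ell^2$-product norm. It is equivalent to the paper's norm because $\|D_iu\|_{L^D}\le\||\nabla u|\|_{L^D}\le\sum_i\|D_iu\|_{L^D}$, by the lattice property of the Luxemburg norm.

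\textbf{Your caution about renorming is well placed.} The sum norm $\|u\|_{L^D}+\|\nabla u\|_{L^D}$ is not itself uniformly convex. Take $u$ a constant with $\|u\|_{L^D}=1$, and $w_k$ the function $k^{-1}\psi(x)\sin(kx_1)$ normalized in $W^{1,D}(\Omega)$, with $\psi\in C_c^\infty(\Omega)\setminus\{0\}$. Then $\|w_k\|_{L^D}\to0$, and both $u+w_k$ and $u-w_k$ have norm tending to $2$. So the uniform convexity assertion holds only up to an equivalent norm, which is harmless for reflexivity.
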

Furthermore, we set
$$
W^{1,D}_0(\Omega) \coloneq W^{1,p}_0(\Omega) \cap W^{1,D}(\Omega).
$$
\begin{rem}\label{rem:inclusion}
Note that
$\overline{C_c^{\infty}(\Omega)}^{W^{1,D}} = \overline{W^{1,q}_0(\Omega)}^{W^{1,D}} \subset W^{1,D}_0(\Omega)$.
In particular, the last inclusion may be proper when the Lavrenteiv phenomenon occurs.
\end{rem}

We use a Poincar\'{e}-type inequality to establish the lower semicontinuity of the double phase functional (see Lemma\,\ref{lem:lsc}).
\begin{prop}\label{prop:poincare}
Assume \eqref{eq:expo_cond}.
Then there exists a constant $C > 0$ depending only on $n,p,q,\alpha,\|a\|_{C^{0,\alpha}(\overline{\Omega})}$ and $\Omega$ such that
$$
\|a^{\frac{1}{q}}u\|_{L^{q}(\Omega)} \le C \left( 
\|a^{\frac{1}{q}}\nabla u\|_{L^{q}(\Omega)} + \|\nabla u\|_{L^p(\Omega)} \right)
$$
for any $u \in W^{1,D}_0(\Omega)$.
\end{prop}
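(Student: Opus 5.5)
The plan is to reduce the weighted estimate to a Sobolev embedding applied to $u$ itself, and to use the Hölder continuity of $a$ only in the regime where $q$ is too large for the unweighted embedding to suffice.

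\textbf{Case $p \ge n$.} Since $u\in W^{1,p}_0(\Omega)$, the Sobolev (or Morrey) embedding gives $\|u\|_{L^{q}(\Omega)}\le C\|\nabla u\|_{L^p(\Omega)}$ for every finite $q$ when $p=n$ (or $p>n$), because $\Omega$ is bounded. Hence
$$
\|a^{\frac1q}u\|_{L^q(\Omega)}\le \|a\|_{L^\infty}^{\frac1q}\|u\|_{L^q(\Omega)}\le C\|\nabla u\|_{L^p(\Omega)},
$$
and this case is done.

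\textbf{Case $p<n$ and $q\le p^*=\frac{np}{n-p}$.} The same argument works through $W^{1,p}_0\subset L^{p^*}\subset L^q$.

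\textbf{Case $p<n$ and $p^*<q\le \frac{(n+\alpha)p}{n-p}$.} This is the main obstacle: we must gain $\alpha$ from the weight. I would argue by contradiction and compactness.

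Suppose there are $u_k\in W^{1,D}_0(\Omega)$ with $\|a^{1/q}u_k\|_{L^q}=1$ and $\|a^{1/q}\nabla u_k\|_{L^q}+\|\nabla u_k\|_{L^p}\to0$. Then $u_k\to 0$ in $W^{1,p}_0$, and by Rellich $u_k\to0$ in $L^p$ and a.e. To derive a contradiction I need some uniform control yielding equi-integrability of $a|u_k|^q$.

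For this control, I would localize with a covering by balls $B_r$ of radius $r$ and use a Sobolev--Poincaré inequality on each ball, splitting into two types of ball.
\begin{itemize}
\item[(a)] If $\inf_{B_r} a\ge 2[a]_\alpha r^\alpha$, then $a$ is comparable to a constant on $B_r$. The weighted Sobolev inequality in $L^q$ then controls $\int_{B_r}a|u|^q$ by $\int_{B_r}a|\nabla u|^q$ plus lower-order terms.
\item[(b)] Otherwise $a\le 3[a]_\alpha r^\alpha$ on $B_r$. Choosing $r$ so that $r^\alpha$ compensates the scaling mismatch $r^{n(1-q/p^*)}$-type factors, which is exactly where $q\le (n+\alpha)p/(n-p)$, i.e. $\frac{q}{p^*}\le 1+\frac{\alpha}{n}$ in effect, enters, one gets $\int_{B_r}a|u|^q\lesssim r^\alpha \|u\|_{L^{p^*}(B_r)}^{q}\,\cdot$ (suitable factor). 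This is handled by the Gagliardo--Nirenberg inequality interpolating between $\|\nabla u\|_{L^p}$ and the $L^{q}$ norm of $u$ weighted by $a$.
\end{itemize}
This is precisely the mechanism of the higher-integrability and density results of Esposito--Leonetti--Mingione \cite{EsLM}, where this bound on $q$ appears.

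An alternative, which I would try first since it is cleaner, is to avoid contradiction entirely: bound $a(x)\le a(y)+[a]_\alpha|x-y|^\alpha$ and write $|u|^q=|u|^{q-p^*\theta}\cdots$. More concretely, use the pointwise estimate $|u(x)|\lesssim I_1(|\nabla u|)(x)$ via Riesz potentials, split $I_1$ into near and far parts at scale $r(x)$, and choose $r$ depending on $a(x)$. The near part is controlled by $a^{1/q}$ times the maximal function of $a^{1/q}|\nabla u|$, and the far part by $r^{1-n/p}\|\nabla u\|_p$, with the power of $a$ absorbing $r^{\alpha}$ thanks to the exponent condition.

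Summing over balls, or integrating the pointwise bound and applying boundedness of the maximal function on $L^q$ (and on $L^p$), yields the inequality with $C$ depending on $n,p,q,\alpha,\|a\|_{C^{0,\alpha}}$ and $\Omega$.

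I expect the borderline equality $q=\frac{(n+\alpha)p}{n-p}$ to require care in the exponent bookkeeping. For this I would rely on \cite{EsLM} or \cite{HaH}, where such embeddings $W^{1,D}_0\hookrightarrow L^D$ under this condition are established.
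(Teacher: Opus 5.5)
\textbf{Verdict: there is a genuine gap in the one nontrivial case.} Your cases $p\ge n$ and $p<n$, $q\le p^*=\frac{np}{n-p}$ are correct and coincide with the paper's argument. The gap is in the case $p^*<q\le\frac{(n+\alpha)p}{n-p}$, where neither of your two routes works as written.

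\textbf{The covering/contradiction route.} Step (b) cannot be carried out. On a ball where you only know $a\le 3[a]_\alpha r^\alpha$, the bound $\int_{B_r}a|u|^q\le 3[a]_\alpha r^\alpha\int_{B_r}|u|^q$ is useless. For $q>p^*$, a function in $W^{1,D}_0(\Omega)$ need not lie in $L^q(B_r)$: take a singularity $|x-x_0|^{-\kappa}$ with $n/q\le\kappa<(n-p)/p$, cut off inside a region where $a=0$. Nor can any factor depending only on $r$ make $\int_{B_r}a|u|^q\lesssim\|u\|^q_{L^{p^*}(B_r)}$ true. The equi-integrability of $a|u_k|^q$ that the contradiction argument needs is essentially a quantitative form of the inequality itself, and you do not supply it. Deferring the endpoint $q=\frac{(n+\alpha)p}{n-p}$ to the literature also leaves part of the statement unproved.

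\textbf{The Riesz-potential route.} The far-part bound is where this breaks. For the near part you need $a(y)\gtrsim a(x)$ on $B_r(x)$, i.e.\ $r\lesssim (a(x)/[a]_\alpha)^{1/\alpha}$. The far part then gives
$a(x)^{1/q}r^{1-n/p}\|\nabla u\|_{L^p}\gtrsim a(x)^{(\alpha/q+1-n/p)/\alpha}\|\nabla u\|_{L^p}$.
Since $q>p^*$ forces $q(n-p)/p>n>\alpha$, the exponent $\alpha/q+1-n/p$ is negative, so this bound blows up as $a(x)\downarrow 0$. Its $L^q$ norm would require $a^{-s}\in L^1(\{a>0\})$ with $s>(n-\alpha)/\alpha$. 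This fails, for example, for $a=\min\{\mathrm{dist}(\cdot,H),1\}^\alpha$ with $H$ a hyperplane meeting $\Omega$. Applying H\"older's inequality in the far region discards exactly the information that makes the estimate work.

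\textbf{The missing idea.} Keep the H\"older continuity of $a$ inside the potential. Combine $a(x)^{1/q}\le a(y)^{1/q}+[a]_\alpha^{1/q}|x-y|^{\alpha/q}$ with $|u|\le C\,I_1(|\nabla u|)$ for $u\in W^{1,1}_0$, and choose $\beta\in(0,\alpha]$ by $q=\frac{(n+\beta)p}{n-p}$. Then, with no splitting at all,
\[
a^{1/q}|u|\le C\,I_1\bigl(a^{1/q}|\nabla u|\bigr)+C\,(\mathrm{diam}\,\Omega)^{\frac{\alpha-\beta}{q}}\,I_{1+\beta/q}(|\nabla u|).
\]
The operator $I_1$ is bounded on $L^q(\Omega)$ because $\Omega$ is bounded. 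The identity $q=\frac{np}{n-(1+\beta/q)p}$ is exactly the Hardy--Littlewood--Sobolev exponent for $I_{1+\beta/q}:L^p\to L^q$. This is precisely where $q\le\frac{(n+\alpha)p}{n-p}$ enters, and it covers the endpoint with no separate treatment. This is the paper's proof.
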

\begin{proof}
Take any $u \in W^{1,D}_0(\Omega)$.
Then we have $u \in W^{1,p}_0(\Omega)$ and $a^{\frac{1}{q}}|\nabla u| \in L^q(\Omega)$.
We first consider the case $p \ge n$ or $\left[p < n\,\text{and}\,\frac{np}{n-p} \ge q\right]$.
The Sobolev embedding theorem implies
\begin{align}
  \|a^{\frac{1}{q}}u\|_{L^{q}(\Omega)}
  \le \|a\|_{L^{\infty}(\Omega)}^{\frac{1}{q}}\|u\|_{L^q(\Omega)}
  \le C\|\nabla u\|_{L^{p}(\Omega)},
\end{align}
where $C$ depends only on $n,p,q,\|a\|_{L^{\infty}(\Omega)}$ and $\Omega$.

It remains to consider the case $p<n$ and $\frac{(n + \alpha)p}{n-p} \ge q > \frac{np}{n-p}$.
We can find $\beta \in (0,\alpha]$ such that $\frac{(n + \beta)p}{n-p} = q$.
Then \cite[Lemma\,7.14]{GT} yields, for a.e.\,$x \in \Omega$, 
\begin{align}
a^{\frac{1}{q}}(x)|u(x)|
&\le \int_{\Omega}\frac{a^{\frac{1}{q}}(x)|\nabla u|(y)}{|x-y|^{n-1}}\,dy\\
&\le \int_{\Omega}\frac{a^{\frac{1}{q}}(y)|\nabla u|(y)}{|x-y|^{n-1}}\,dy
+ [a]_{C^{0,\alpha}(\overline{\Omega})}^{\frac{1}{q}}\int_{\Omega} 
\frac{|x-y|^{\frac{\alpha}{q}}|\nabla u|(y)}{|x-y|^{n-1}}\,dy\\
&= I_1(a^{\frac{1}{q}}|\nabla u|)(x)
+ [a]_{C^{0,\alpha}(\overline{\Omega})}^{\frac{1}{q}}
\int_{\Omega} 
\frac{|x-y|^{\frac{\alpha}{q} - \frac{\beta}{q}}|\nabla u|(y)}{|x-y|^{n-\left(1+\frac{\beta}{q} \right)}}\,dy \\
&\le I_1(a^{\frac{1}{q}}|\nabla u|)(x) + 
[a]_{C^{0,\alpha}(\overline{\Omega})}^{\frac{1}{q}}|\mathrm{diam}\,\Omega|^{\frac{\alpha}{q} - \frac{\beta}{q}}I_{1+\frac{\beta}{q}}(|\nabla u|)(x),
\end{align}
where $I_{\gamma}(\cdot)\,(\gamma \in (0,n])$ is the Riesz potential.
Note that $q = \frac{np}{n-\left(1+ \frac{\beta}{q}\right)p}$ and
the standard mapping properties of the Riesz potential (see\,\cite[LEMMA\,1.34\,and\,THEOREM\,1.36]{KiLV},\,for example) yield
\begin{align}
  \|a^{\frac{1}{q}}u\|_{L^{q}(\Omega)}
  &\le \|I_1(a^{\frac{1}{q}}|\nabla u|)\|_{L^q(\Omega)} 
  + [a]_{C^{0,\alpha}(\overline{\Omega})}^{\frac{1}{q}}|\mathrm{diam}\,\Omega|^{\frac{\alpha}{q} - \frac{\beta}{q}}\|I_{1+\frac{\beta}{q}}(|\nabla u|)\|_{L^q(\Omega)} \\
  &\le C \left( \|a^{\frac{1}{q}}\nabla u\|_{L^q(\Omega)}
  + \|\nabla u\|_{L^p(\Omega)} \right),
\end{align}
where $C$ depends only on $n,p,q,\alpha,[a]_{C^{0,\alpha}(\overline{\Omega})}$ and $\Omega$.
The proof is completed.
\end{proof}

In the following, we use the shorthand notation
$$
W^{1,D}_g(\Omega) = g + W^{1,D}_0(\Omega).
$$

Fix $\mu \in [0,1]$
and we define the functional $\widetilde{\varphi}: L^{2}(\Omega) \rightarrow [0,\infty]$ by
\begin{equation}\label{eq:phi_tilde}
\widetilde{\varphi}(u)\coloneq
\left\{
\begin{aligned}
\varphi(u)
\quad
&\text{if}\,u \in L^{2}(\Omega) \cap W^{1,D}_g(\Omega),\\
\infty 
\quad
&\text{otherwise,}
\end{aligned}
\right.
\end{equation}
with $D(\widetilde{\varphi}) \coloneq L^{2}(\Omega) \cap W^{1,D}_g(\Omega)$.
Here $\varphi$ is the double phase functional defined in \eqref{eq:double_phase_energy}.
The subdiiferential operator $\partial{\widetilde{\varphi}}:L^2(\Omega) \rightarrow 2^{L^2(\Omega)}$ is given by
$$
\partial{\widetilde{\varphi}}(u)
\coloneq
\{w \in L^2(\Omega) : \widetilde{\varphi}(v) - \widetilde{\varphi}(u) \ge (w,v-u)_{L^2(\Omega)}\,\text{for any}\,v \in L^2(\Omega)\}
$$
for $u \in D(\widetilde{\varphi})$.
Here, $(\cdot,\cdot)_{L^2(\Omega)}$ denotes the $L^2(\Omega)$-inner product.
The domain of the subdifferential is defined by 
$$
D(\partial{\widetilde{\varphi}}) \coloneq \{u \in D(\widetilde{\varphi}) : \partial{\widetilde{\varphi}}(u) \neq \emptyset\}.
$$
We rewrite \eqref{eq:main} as the following abstract evolution equation in $L^2(\Omega)$:
\begin{equation}\label{eq:main_subdiff}
\left\{
\begin{aligned}
\frac{du}{dt}(t) + \partial{\widetilde{\varphi}}(u(t)) \ni 0
\quad 
&\text{in}\,L^2(\Omega)
\quad
\text{for}
\quad
t > 0,\\
u(0) = u_0.
\end{aligned}
\right.
\end{equation}
Now we state the definition of the $L^2$-solution to \eqref{eq:main_subdiff} (cf.\,\cite[DEFINITION\,3.1]{Br_0}).
\begin{dfn}\label{dfn:2.1}
For each $u_0 \in L^{2}(\Omega)$, a function $u \in C([0,\infty);L^2(\Omega))$ is a $L^2$-solution to \eqref{eq:main_subdiff} if the following conditions are satisfied:
\begin{itemize}
  \item $u \in W^{1,2}_{loc}((0,\infty);L^2(\Omega))$.
  \item For a.e.\,$t>0$, $u(t) \in D(\partial \widetilde{\varphi})$ and $u$ satisfies $\frac{du}{dt}(t) + \partial{\widetilde{\varphi}}(u(t)) \ni 0$.
  \item $u(0) = u_0$.
\end{itemize}
\end{dfn}


Now let us prove the global-wellposedness of \eqref{eq:main_subdiff} under Definition\,\ref{dfn:2.1}.
Here we consider the derivative of the double phase functional $\varphi$.
Observe that $\varphi$ is G$\hat{\text{a}}$teaux differentiable in $W^{1,D}(\Omega)$;
we have
\begin{align}
d\varphi(u)(v)
&\coloneqq
\lim_{t \rightarrow 0} \frac{\varphi(u + tv) - \varphi(u)}{t}\\
&= \int_{\Omega} \left( p(|\nabla u|^2 + \mu)^{\frac{p-2}{2}} + qa(x)(|\nabla u|^2 + \mu)^{\frac{q-2}{2}} \right)\nabla u \cdot \nabla v\,dx
\end{align}
for any $u,v \in W^{1,D}(\Omega)$.
Thanks to Proposition\,\ref{prop:eqiv_norm_modular},
$d\varphi$ belongs to $C(W^{1,D}(\Omega), (W^{1,D}(\Omega))^*)$,
where $(W^{1,D}(\Omega))^*$ is the dual space of $W^{1,D}(\Omega)$. 
Therefore the following lemma holds.
\begin{lem}\label{lem:diffrentiability}
The double phase functional $\varphi$ belongs to $C^1(W^{1,D}(\Omega),\R)$.
Moreover, it holds that 
$$
 d\varphi(u)(v) 
 = \int_{\Omega} \left( p(|\nabla u|^2 + \mu)^{\frac{p-2}{2}} + qa(x)(|\nabla u|^2 + \mu)^{\frac{q-2}{2}} \right)\nabla u \cdot \nabla v\,dx
$$
for any $u,\,v \in W^{1,D}(\Omega)$.
\end{lem}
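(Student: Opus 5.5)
We need to prove that $\varphi\in C^1(W^{1,D}(\Omega),\R)$ with the stated derivative. The plan is to establish three things in turn: Gâteaux differentiability with the displayed formula, that $d\varphi(u)$ is a bounded linear functional on $W^{1,D}(\Omega)$, and continuity of $u\mapsto d\varphi(u)$ into $(W^{1,D}(\Omega))^*$. Together these give Fréchet differentiability and $C^1$ regularity by the standard criterion.

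Write $F(x,\xi)=(|\xi|^2+\mu)^{p/2}+a(x)(|\xi|^2+\mu)^{q/2}$, so that $\varphi(u)=\int_\Omega F(x,\nabla u)\,dx$. Since $\mu\le 1$ and $|\Omega|<\infty$, we have $F(x,\xi)\le C(1+|\xi|^p+a(x)|\xi|^q)$, so $\varphi$ is finite on $W^{1,D}(\Omega)$. For fixed $x$, $F(x,\cdot)$ is $C^1$ with
\[
\partial_\xi F(x,\xi)=\bigl(p(|\xi|^2+\mu)^{\frac{p-2}{2}}+qa(x)(|\xi|^2+\mu)^{\frac{q-2}{2}}\bigr)\xi .
\]
This also holds at $\xi=0$ when $\mu=0$, because $p,q>1$. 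The derivative satisfies
\[
|\partial_\xi F(x,\xi)|\le C\bigl(1+|\xi|^{p-1}+a(x)|\xi|^{q-1}\bigr).
\]

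\textbf{Gâteaux differentiability.} Fix $u,v\in W^{1,D}(\Omega)$ and $0<|t|\le 1$. By the mean value theorem, the difference quotient $\frac{F(x,\nabla u+t\nabla v)-F(x,\nabla u)}{t}$ is bounded in absolute value by
\[
C\bigl(1+(|\nabla u|+|\nabla v|)^{p-1}+a(|\nabla u|+|\nabla v|)^{q-1}\bigr)|\nabla v|.
\]
This bound is integrable. Indeed, Young's inequality gives $|\xi|^{p-1}|\eta|\le |\xi|^p+|\eta|^p$ and $a|\xi|^{q-1}|\eta|\le a|\xi|^q+a|\eta|^q$. Proposition~\ref{prop:eqiv_norm_modular} then shows that $\rho(|\nabla u|)$ and $\rho(|\nabla v|)$ are finite. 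Dominated convergence yields the displayed formula for $d\varphi(u)(v)$. The same Young inequalities, combined with Hölder's inequality in the form $\int|\xi|^{p-1}|\eta|\le\|\xi\|_p^{p-1}\|\eta\|_p$ and $\int a|\xi|^{q-1}|\eta|\le\|a^{1/q}\xi\|_q^{q-1}\|a^{1/q}\eta\|_q$, together with Proposition~\ref{prop:eqiv_norm_modular}, show that $|d\varphi(u)(v)|\le C(u)\|v\|_{W^{1,D}(\Omega)}$. Hence $d\varphi(u)\in(W^{1,D}(\Omega))^*$.

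\textbf{Continuity of $d\varphi$.} This is the main step. Let $u_k\to u$ in $W^{1,D}(\Omega)$. Then $\rho(|\nabla u_k-\nabla u|)\to0$ by Proposition~\ref{prop:eqiv_norm_modular}, so $\nabla u_k\to\nabla u$ in $L^p$ and $a^{1/q}\nabla u_k\to a^{1/q}\nabla u$ in $L^q$. Split the vector field $A(x,\xi)=\partial_\xi F(x,\xi)$ as $A_p+A_q$, with $A_p(\xi)=p(|\xi|^2+\mu)^{\frac{p-2}{2}}\xi$ and $A_q(x,\xi)=qa(x)(|\xi|^2+\mu)^{\frac{q-2}{2}}\xi$. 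Hölder's inequality gives
\[
\|d\varphi(u_k)-d\varphi(u)\|_{*}\le \|A_p(\nabla u_k)-A_p(\nabla u)\|_{L^{p'}}\,\sup\|\nabla v\|_{L^p}+\bigl\|a^{-1/q}\bigl(A_q(x,\nabla u_k)-A_q(x,\nabla u)\bigr)\bigr\|_{L^{q'}}\,\sup\|a^{1/q}\nabla v\|_{L^q},
\]
with suprema over $\|v\|_{W^{1,D}}\le1$. Here the factor $a^{-1/q}A_q$ should be read as $q\,a^{1/q'}(|\xi|^2+\mu)^{\frac{q-2}{2}}\xi$, which is harmless where $a=0$. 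Both suprema are bounded by Proposition~\ref{prop:eqiv_norm_modular}.

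It remains to prove two Nemytskii-type convergences: $A_p(\nabla u_k)\to A_p(\nabla u)$ in $L^{p'}$, and $a^{1/q'}(|\nabla u_k|^2+\mu)^{\frac{q-2}{2}}\nabla u_k\to$ the same expression for $u$ in $L^{q'}$. For each, pass to a subsequence that converges a.e. and is dominated by a fixed $L^p$ function (respectively $L^q$ after weighting by $a^{1/q}$), using the partial converse of dominated convergence. Continuity of the maps $\xi\mapsto A_p(\xi)$ and $\xi\mapsto a^{1/q'}(x)(|\xi|^2+\mu)^{\frac{q-2}{2}}\xi$ gives a.e. convergence. Domination follows from the growth bounds $|A_p|^{p'}\le C(1+|\xi|^p)$ and $(a^{1/q'}|\xi|^{q-1})^{q'}=a|\xi|^q$. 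Dominated convergence then gives convergence along the subsequence, and the usual subsequence-of-subsequence argument gives it for the full sequence.

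Finally, a Gâteaux derivative that is continuous into the dual space is a Fréchet derivative. This concludes $\varphi\in C^1(W^{1,D}(\Omega),\R)$ with the stated formula.
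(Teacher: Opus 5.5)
Your proposal is correct and follows the same route as the paper: G\^{a}teaux differentiability with the displayed formula, then continuity of $u \mapsto d\varphi(u)$ into $(W^{1,D}(\Omega))^*$ via the norm--modular equivalence of Proposition~\ref{prop:eqiv_norm_modular}, hence $\varphi \in C^1(W^{1,D}(\Omega),\R)$. The paper only asserts these steps; your dominated-convergence argument for the G\^{a}teaux derivative and your subsequence/Nemytskii argument for the continuity of $d\varphi$ correctly supply the details it omits.
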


We state the characterization of $D(\partial \widetilde{\varphi})$.
This characterization implies that the $L^2$-solution is also a weak solution (see\,\cite{KiKM,KiS}, for example).
The following lemma follows from Lemma\,\ref{lem:diffrentiability} 
and a standard argument,
so we omit the proof.
\begin{lem}\label{lem:charc_subdiff_domain}
Assume \eqref{assmpt:boundary} and \eqref{eq:expo_cond}.
Let $u \in D(\widetilde{\varphi})$, then the following assertions are equivalent:
\begin{itemize}
  \item[\rm{(i)}] $u \in D \left( \partial{\widetilde{\varphi}} \right)$.
  \item[\rm{(ii)}] There exists a unique function $w \in L^2(\Omega)$ such that
  \begin{align}\label{eq:Delta_DP}
  \int_{\Omega} \left( p(|\nabla u|^2 + \mu)^{\frac{p-2}{2}} + qa(x)(|\nabla u|^2 + \mu)^{\frac{q-2}{2}} \right)\nabla u \cdot \nabla \xi\,dx
  =
  \int_{\Omega} w\xi
  \end{align}
  for any $\xi \in L^2(\Omega) \cap W^{1,D}_0(\Omega)$.
\end{itemize}
Moreover, if $u \in D \left( \partial{\widetilde{\varphi}} \right)$, then $\partial{\widetilde{\varphi}}(u) = \{w\}$.
\end{lem}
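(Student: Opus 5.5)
The plan is to prove the two implications separately, using convexity of $\varphi$ on the affine space $D(\widetilde{\varphi}) = L^2(\Omega)\cap W^{1,D}_g(\Omega)$ together with the Gâteaux derivative from Lemma\,\ref{lem:diffrentiability}.

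For (i)$\Rightarrow$(ii), let $w \in \partial\widetilde{\varphi}(u)$ and fix $\xi \in L^2(\Omega)\cap W^{1,D}_0(\Omega)$. For every $t \in \R$ the function $u+t\xi$ lies in $D(\widetilde{\varphi})$, so the subdifferential inequality gives $\varphi(u+t\xi)-\varphi(u) \ge t(w,\xi)_{L^2(\Omega)}$. We divide by $t>0$ and by $t<0$ and let $t\to 0$. Lemma\,\ref{lem:diffrentiability} then yields $d\varphi(u)(\xi) \ge (w,\xi)$ and $d\varphi(u)(\xi)\le (w,\xi)$, which is exactly \eqref{eq:Delta_DP}. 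The same argument shows that every element of $\partial\widetilde{\varphi}(u)$ satisfies \eqref{eq:Delta_DP}.

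Uniqueness of $w$, and hence $\partial\widetilde{\varphi}(u)=\{w\}$, follows once we know that $L^2(\Omega)\cap W^{1,D}_0(\Omega)\supset C_c^\infty(\Omega)$ is dense in $L^2(\Omega)$. Indeed, if $w_1,w_2$ both satisfy \eqref{eq:Delta_DP}, then $\int(w_1-w_2)\xi=0$ for all $\xi\in C_c^\infty(\Omega)$, so $w_1=w_2$.

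For (ii)$\Rightarrow$(i), suppose $w$ satisfies \eqref{eq:Delta_DP} and let $v\in L^2(\Omega)$. If $v\notin D(\widetilde{\varphi})$, the inequality $\widetilde{\varphi}(v)-\widetilde{\varphi}(u)\ge (w,v-u)$ holds trivially because the left side is $+\infty$. If $v\in D(\widetilde{\varphi})$, then $\xi:=v-u$ belongs to $L^2(\Omega)\cap W^{1,D}_0(\Omega)$. We then use convexity of $\varphi$ along the segment: the map $s\mapsto\varphi(u+s\xi)$ is convex and differentiable on $[0,1]$, since the integrand $z\mapsto (|z|^2+\mu)^{p/2}+a(x)(|z|^2+\mu)^{q/2}$ is convex. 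Hence $\varphi(v)-\varphi(u)\ge \frac{d}{ds}\varphi(u+s\xi)|_{s=0}=d\varphi(u)(\xi)=(w,v-u)$, so $w\in\partial\widetilde{\varphi}(u)$.

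The only delicate point I expect is justifying $d\varphi(u)(\xi)$ for $\xi\in W^{1,D}_0$, that is, that $u+t\xi\in W^{1,D}(\Omega)$ and that we may differentiate under the integral. Here $|\nabla(u+t\xi)|\in L^D$ because $L^D$ is a vector space; this uses the $\Delta_2$ property implied by Proposition\,\ref{prop:eqiv_norm_modular}. Once that holds, Lemma\,\ref{lem:diffrentiability} applies directly. Note that the full norm of $\xi$ in $L^D$ is never needed, only $\nabla\xi\in L^D$, so the role of \eqref{eq:expo_cond} (through Proposition\,\ref{prop:poincare}) is merely to ensure $u,\xi\in W^{1,D}(\Omega)$ as defined, i.e.\ $a^{1/q}\xi\in L^q$.
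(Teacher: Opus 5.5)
Your proposal is correct and is exactly the ``standard argument'' the paper has in mind (the paper omits the proof and cites only Lemma~\ref{lem:diffrentiability}): you use one-sided difference quotients of the subdifferential inequality along $u+t\xi$ for (i)$\Rightarrow$(ii), convexity of $s\mapsto\varphi(u+s\xi)$ for (ii)$\Rightarrow$(i), and density of $C_c^\infty(\Omega)\subset L^2(\Omega)\cap W^{1,D}_0(\Omega)$ in $L^2(\Omega)$ for uniqueness and for $\partial\widetilde{\varphi}(u)=\{w\}$. One small correction to your closing remark: $W^{1,D}_0(\Omega)$ is defined as $W^{1,p}_0(\Omega)\cap W^{1,D}(\Omega)$ and $g\in W^{1,D}(\Omega)$ by \eqref{assmpt:boundary}, so $u,\xi\in W^{1,D}(\Omega)$ hold by definition, and \eqref{eq:expo_cond} plays no role in this lemma (it is needed only for the lower semicontinuity in Lemma~\ref{lem:lsc}).
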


Next we show the lower semicontinuity of the functional $\widetilde{\varphi}$ in $L^2(\Omega)$.
\begin{lem}\label{lem:lsc}
Assume \eqref{assmpt:boundary} and \eqref{eq:expo_cond}.
Then, $\widetilde{\varphi}$ is proper, convex and lower semicontinuous in $L^2(\Omega)$.
\end{lem}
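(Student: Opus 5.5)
Properness is immediate from \eqref{assmpt:boundary}: since $g \in L^2(\Omega) \cap W^{1,D}(\Omega)$, we have $g \in D(\widetilde{\varphi})$ and $\widetilde{\varphi}(g) = \varphi(g) < \infty$. Here we use that $\mu \le 1$ and $a \in L^\infty$, so $(|\nabla g|^2+\mu)^{p/2}$ and $a(|\nabla g|^2+\mu)^{q/2}$ are integrable. Convexity also follows directly. The set $D(\widetilde{\varphi}) = L^2(\Omega) \cap W^{1,D}_g(\Omega)$ is an affine (hence convex) set, because $W^{1,D}_0(\Omega)$ is a linear space. On it, $\varphi$ is convex, since $\xi \mapsto (|\xi|^2+\mu)^{s/2}$ is convex on $\R^n$ for $s \ge 1$ and $a \ge 0$. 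Setting $\widetilde{\varphi} = \infty$ off this set preserves convexity.

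The substance is lower semicontinuity. The plan is to show that each sublevel set $\{u \in L^2(\Omega) : \widetilde{\varphi}(u) \le M\}$ is closed in $L^2(\Omega)$. So take $u_k \to u$ in $L^2(\Omega)$ with $\widetilde{\varphi}(u_k) \le M$, and pass to a subsequence realizing the $\liminf$. Then $u_k - g \in W^{1,D}_0(\Omega)$, and the bound on $\varphi$ gives uniform bounds on $\|\nabla u_k\|_{L^p}$ and $\|a^{1/q}\nabla u_k\|_{L^q}$, hence on $\|\nabla(u_k-g)\|_{L^D}$ via Proposition~\ref{prop:eqiv_norm_modular}.

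Next I would bound $u_k - g$ itself in $L^D$. The Poincar\'e inequality in $W^{1,p}_0$ controls $\|u_k-g\|_{L^p}$. Proposition~\ref{prop:poincare}, which is exactly where \eqref{eq:expo_cond} enters, controls $\|a^{1/q}(u_k-g)\|_{L^q}$. Together these show that $(u_k-g)$ is bounded in $W^{1,D}(\Omega)$.

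Since $W^{1,D}(\Omega)$ is reflexive (Proposition~\ref{prop:reflexive}), a further subsequence converges weakly in $W^{1,D}(\Omega)$ to some $v$. The $W^{1,p}$ bound together with $L^2$ convergence identifies $v = u - g$. Moreover, $W^{1,p}_0(\Omega)$ is weakly closed in $W^{1,p}(\Omega)$, so $u - g \in W^{1,p}_0(\Omega) \cap W^{1,D}(\Omega) = W^{1,D}_0(\Omega)$, and therefore $u \in D(\widetilde{\varphi})$.

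Finally, $\varphi$ is convex and continuous on $W^{1,D}(\Omega)$; continuity follows from Lemma~\ref{lem:diffrentiability}, which gives $C^1$. Hence $\varphi$ is weakly lower semicontinuous on $W^{1,D}(\Omega)$ (Mazur), and
\[
\widetilde{\varphi}(u) = \varphi(u) \le \liminf_{k\to\infty} \varphi(u_k) \le M.
\]
Alternatively, weak convergence of $\nabla u_k$ in $L^p$ together with convexity of the integrand gives the same inequality by a standard lower semicontinuity theorem.

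The main obstacle is the compactness step, namely obtaining a bound on $u_k - g$ in the full $W^{1,D}$ norm rather than only on the gradients. The weighted term $a|u|^q$ is not controlled by the $L^2$ convergence, and this is precisely what Proposition~\ref{prop:poincare} supplies.
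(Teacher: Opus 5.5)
Your proposal is correct and follows essentially the same route as the paper. Both arguments show that the sublevel sets are closed, bound $u_k$ in $W^{1,D}(\Omega)$ via the modular estimate and Proposition~\ref{prop:poincare} applied to $u_k-g$, extract a weakly convergent subsequence by reflexivity, and conclude by weak lower semicontinuity of the convex $C^1$ functional $\varphi$; you are in fact slightly more careful than the paper in identifying the weak limit with $u$ and in verifying that $u\in W^{1,D}_g(\Omega)$.
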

\begin{proof}
Since $g \in D(\widetilde{\varphi})$, then $\widetilde{\varphi}$ is proper.
Moreover, it is obvious that $\widetilde{\varphi}$ is convex.
Finally, we show the lower semicontinuity of $\widetilde{\varphi}$ in $L^2(\Omega)$.
Let $\lambda \in \R$ be fixed and set
$$
[\widetilde{\varphi} \le \lambda] \coloneq \{u \in L^2(\Omega): \widetilde{\varphi}(u) \le \lambda\}.
$$
It suffices to prove that $[\widetilde{\varphi} \le \lambda]$ is closed in $L^2(\Omega)$.
Take $\{u_k\}_{k \in \N}$ and $u$ such that $u_k \rightarrow u$ in $L^2(\Omega)$.
In the following, we prove $u \in [\widetilde{\varphi} \le \lambda]$.

We first check $u \in D(\widetilde{\varphi})$.
Note that $\rho(|\nabla u_k|) \le \widetilde{\varphi}(u_k) \le \lambda$ for any $k \in \N$.
Moreover, by Proposition\,\ref{prop:poincare}, it follows that
\begin{align}
\rho(u_k) &= \int_{\Omega} \left(|u_k|^p + a(x)|u_k|^q \right)\,dx\\
&\le c\int_{\Omega} \left(|u_k-g|^p + a(x)|u_k-g|^q \right)\,dx
+ c\int_{\Omega} \left(|g|^p + a(x)|g|^q \right)\,dx \\
&\le c\int_{\Omega} |\nabla u_k - \nabla g|^p\,dx
+ c \left( \int_{\Omega} |\nabla u_k - \nabla g|^p\,dx \right)^{\frac{q}{p}}
+  c\int_{\Omega} a(x)|\nabla u_k - \nabla g|^q \,dx \\
&+ c\int_{\Omega} \left(|g|^p + a(x)|g|^q \right)\,dx \\
&\le c\int_{\Omega} |\nabla u_k|^p\,dx
+ c\left( \int_{\Omega} |\nabla u_k|^p\,dx \right)^{\frac{q}{p}}
+ c\int_{\Omega} a(x)|\nabla u_k|^q \,dx \\
&+ c\int_{\Omega} |\nabla g|^p\,dx
+ c\left( \int_{\Omega} |\nabla g|^p\,dx \right)^{\frac{q}{p}}
+ c\int_{\Omega} a(x)|\nabla g|^q \,dx
+ c\int_{\Omega} \left(|g|^p + a(x)|g|^q \right)\,dx \\
&\le c\max\{1, \rho(u_k)^\frac{q}{p}, \rho(|\nabla g|)^\frac{q}{p},\rho(g)\}
\le c\max\{1, \lambda^\frac{q}{p}, \rho(|\nabla g|)^\frac{q}{p},\rho(g)\},
\end{align}
where $c$ depends on $n,p,q,\alpha,\|a\|_{C^{0,\alpha}(\overline{\Omega})}$ and $\Omega$.
Then $\{u_k\}$ is a bounded sequence in $W^{1,D}_g(\Omega)$ by Proposition\,\ref{prop:eqiv_norm_modular}.
Since $W^{1,D}(\Omega)$ is reflexive by Proposition\,\ref{prop:reflexive},
we may extract a subsequence of $\{u_k\}$, still denoted by $\{u_k\}$, such that
$u_k \rightharpoonup u$ in $W^{1,D}(\Omega)$ weakly.
Moreover, we have $u \in W^{1,D}_g(\Omega)$ since $W^{1,D}_g(\Omega)$ is a closed convex set in $W^{1,D}(\Omega)$.
Thus it follows that $u \in D(\widetilde{\varphi})$.

It remains to show $\widetilde{\varphi}(u) \le \lambda$.
By Lemma\,\ref{lem:diffrentiability} and \cite[Corollary\,3.9]{Br}, 
$\varphi$ is weakly lower semicontinuous in $W^{1,D}(\Omega)$.
Therefore, we have
$
\widetilde{\varphi}(u) 
= \varphi(u)
\le \liminf_{k \rightarrow \infty} \varphi(u_k)
= \liminf_{k \rightarrow \infty} \widetilde{\varphi}(u_k)
\le \lambda.
$
The proof is completed.
\end{proof}

Note that $\overline{D(\partial \widetilde{\varphi})}^{L^2(\Omega)} = \overline{D( \widetilde{\varphi})}^{L^2(\Omega)}$ by Lemma\,\ref{lem:lsc} and \cite[PROPOSITION\,2.11]{Br_0}.
Moreover, observe that $\overline{D( \widetilde{\varphi})}^{L^2(\Omega)} = L^2(\Omega)$.
Therefore, Lemma\,\ref{lem:lsc} and \cite[THEOREM\,3.4 and 3.6]{Br_0} yield the following theorem:
\begin{thm}\label{thm:glw}
Assume \eqref{assmpt:boundary} and \eqref{eq:expo_cond}.
For each $u_0 \in L^2(\Omega)$, there exists a unique $L^2$-solution $u \in C([0,\infty);L^2(\Omega))$ to \eqref{eq:main_subdiff} such that $t \mapsto \widetilde{\varphi}(u(t))$ is absolutely continuous on $(0,\infty)$ and 
\begin{align}\label{eq:energy_decreasing}
\left\|\frac{du}{dt}\right\|^2_{L^2(\Omega)} + \frac{d}{dt}\widetilde{\varphi}(u(t)) = 0
\quad
\text{for}
\quad
\text{a.e.\,}
t \in (0,\infty).
\end{align}
In particular, if $u_0 \in D(\widetilde{\varphi})$, then $u \in W^{1,2}_{loc}([0,\infty);L^2(\Omega))$ and $t \mapsto \widetilde{\varphi}(u(t))$ is absolutely continuous on $[0,\infty)$.
Moreover, if $u_i$ is the unique $L^2$-solution of \eqref{eq:main_subdiff} with $u_{0,i} \in L^2(\Omega)$ for $i = 1,2$,
then it holds that
\begin{align}\label{eq:conti_dependence}
\|u_{1}(t) - u_{2}(t)\|_{L^2(\Omega)}
\le \|u_{0,1} - u_{0,2}\|_{L^2(\Omega)}
\quad
\text{for\,any}
\quad
t \in [0,\infty).
\end{align}
\end{thm}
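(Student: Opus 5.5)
The statement to prove is Theorem\,\ref{thm:glw}. My plan is to check the hypotheses of Brezis' abstract theory for subdifferentials of convex functionals on a Hilbert space and then read the conclusions off \cite[THEOREM\,3.4 and 3.6]{Br_0}. The Hilbert space is $H = L^2(\Omega)$. By Lemma\,\ref{lem:lsc}, $\widetilde{\varphi}$ is proper, convex and lower semicontinuous on $H$, so $\partial\widetilde{\varphi}$ is maximal monotone. Brezis' theorem then yields, for every $u_0 \in \overline{D(\partial\widetilde{\varphi})}^{H}$, a unique solution $u \in C([0,\infty);H)$ of \eqref{eq:main_subdiff}. It also gives that $\sqrt{t}\,\frac{du}{dt} \in L^2_{loc}([0,\infty);H)$, that $t\mapsto\widetilde{\varphi}(u(t))$ is absolutely continuous on $(0,\infty)$ (convex in $t$ after composition, nonincreasing), and the chain rule
$$
\frac{d}{dt}\widetilde{\varphi}(u(t)) = \left(g(t),\frac{du}{dt}(t)\right)_{L^2(\Omega)}
\quad\text{for every } g(t)\in\partial\widetilde{\varphi}(u(t)).
$$
Taking $g(t) = -\frac{du}{dt}(t)$ gives exactly the identity \eqref{eq:energy_decreasing}. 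When $u_0 \in D(\widetilde{\varphi})$, the refined statement (Theorem 3.6 of \cite{Br_0}) gives $\frac{du}{dt}\in L^2(0,T;H)$ for every $T$, together with absolute continuity of the energy up to $t=0$. This is the $W^{1,2}_{loc}([0,\infty);L^2)$ assertion.

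Next I have to make sure every $u_0 \in L^2(\Omega)$ is admissible, i.e. that $\overline{D(\partial\widetilde{\varphi})}^{L^2} = L^2(\Omega)$. By \cite[PROPOSITION\,2.11]{Br_0}, $\overline{D(\partial\widetilde{\varphi})} = \overline{D(\widetilde{\varphi})}$, so it suffices to show that $D(\widetilde{\varphi}) = L^2 \cap (g + W^{1,D}_0)$ is dense in $L^2$. I expect this to be the only nontrivial point, because of the affine constraint. I would argue as follows. By \eqref{assmpt:boundary}, $g\in L^2\cap W^{1,D}$, and $C_c^\infty(\Omega)\subset W^{1,D}_0\cap L^2$ since $a$ is bounded. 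Hence $g + C_c^\infty(\Omega) \subset D(\widetilde{\varphi})$. Moreover $g + C_c^\infty(\Omega)$ is dense in $L^2$, because $C_c^\infty$ is dense in $L^2$ and translation by the fixed element $g\in L^2$ preserves density.

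Finally, the contraction estimate \eqref{eq:conti_dependence} follows from monotonicity of $\partial\widetilde{\varphi}$. For a.e. $t$ we have
$$
\frac12\frac{d}{dt}\|u_1-u_2\|_{L^2}^2 = -\left(w_1-w_2,\,u_1-u_2\right)_{L^2} \le 0,
\quad w_i\in\partial\widetilde{\varphi}(u_i(t)).
$$
Here $t\mapsto\|u_1-u_2\|^2$ is locally absolutely continuous on $(0,\infty)$, and continuity at $t=0$ in $L^2$ extends the inequality to $[0,\infty)$. Uniqueness is the special case $u_{0,1}=u_{0,2}$. The remaining claims are direct citations, so the density step above is the only place where the specific structure ($g\in L^2$, $a$ bounded) enters.
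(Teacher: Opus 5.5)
Your proposal is correct and follows the paper's argument. Lemma~\ref{lem:lsc} makes $\widetilde{\varphi}$ proper, convex and lower semicontinuous on $L^2(\Omega)$; \cite[PROPOSITION\,2.11]{Br_0} together with the density of $D(\widetilde{\varphi})$ gives $\overline{D(\partial\widetilde{\varphi})}^{L^2(\Omega)} = L^2(\Omega)$; the conclusions are then read off from \cite[THEOREM\,3.4 and 3.6]{Br_0}. You go slightly beyond the paper by spelling out the density through $g + C_c^\infty(\Omega) \subset D(\widetilde{\varphi})$, which the paper only asserts, and by deriving the contraction estimate directly from monotonicity instead of citing it; both steps are sound.
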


\begin{rem}\label{rem:contraction}
\begin{enumerate}
  \item[(i)]\,In the following, for each $u_0 \in L^2(\Omega)$, 
  we denote by $S(t)u_0\,(t \ge 0)$ the unique $L^2$-solution with $u_0$. 
  Note that $\{S(t)\}_{t \ge 0}$ is a contraction semigroup on $L^2(\Omega)$ by the uniquness of $L^2$-solution and \eqref{eq:conti_dependence}.
  \item[(ii)]\,From \eqref{eq:energy_decreasing}, it follows that $t \mapsto \widetilde{\varphi}(S(t)u_0)$ is decreasing on $[0,\infty)$ for any $u_0 \in D(\widetilde{\varphi})$.
  \item[(iii)]\,The conclusion of Theorem\,\ref{thm:glw} remains valid with $(0,\infty)$ replaced by $(0,T)$ for any $T > 0$.
\end{enumerate}
\end{rem}

\begin{rem}
Since \eqref{eq:main_subdiff} has no forcing term, 
we can obtain better regularity for the solution (see \cite[THEOREM\,3.2]{Br_0}).
However, for the following argument we do not need such the properties.
Then we omit them for brevity.
\end{rem}



\subsection{Proof of Theorem\,\ref{thm:fti}}
We now demonstrate Theorem\,\ref{thm:fti}.
Recall the functionals $\varphi$ and $\widetilde{\varphi}$ defined in \eqref{eq:double_phase_energy} and \eqref{eq:phi_tilde}.
Note that under \eqref{assmpt:2.2.1} we have $D(\varphi) \coloneq D(\widetilde{\varphi}) = W^{1,D}_g(\Omega)$ and $\varphi = \widetilde{\varphi}$ in $D(\varphi)$ by the Sobolev embedding $W^{1,D}_0(\Omega) \subset W^{1,p}_0(\Omega) \subset L^2(\Omega)$.
Using the evolution variational inequality (see \eqref{eq:evi}),
we prove Theorem\,\ref{thm:fti}\,(i).


\begin{proof}[Proof\,of\,Theorem\,\ref{thm:fti}(i)]
Take any $v \in D(\varphi)$.
By Definition\,\ref{dfn:2.1}, 
we have $\frac{d}{dt}\left(S(\cdot)v\right)(t) + \partial{\varphi}(S(t)v) \ni 0$ for a.e.\,$t >0$.
Let $t > 0$ satisfy $\frac{d}{dt}\left(S(\cdot)v\right)(t) + \partial{\varphi}(S(t)v) \ni 0$.
From the definition of the subdifferential, it follows that
$$
\varphi(w) - \varphi(S(t)v) \ge \left(-\frac{d}{dt}\left(S(\cdot)v\right)(t), w - S(t)v\right)_{L^2(\Omega)}
\quad
\text{for any}
\quad
w \in D(\varphi).
$$
Here note that $\left(-\frac{d}{dt}\left(S(\cdot)v\right)(t), w - S(t)v\right)_{L^2(\Omega)} = \frac{1}{2}\frac{d}{dt}\left(\|S(\cdot)v - w\|^2_{L^2} \right)(t)$.
Then we have the following evolution variational inequality:
\begin{align}\label{eq:evi}
 \varphi\left(S(t)v\right) - \varphi(w) \le -\frac{1}{2}\frac{d}{dt}\left(\|S(\cdot)v - w\|^2_{L^2} \right)(t) 
\end{align}
for a.e.\,$t > 0$ and any $v,\,w \in D(\varphi)$.
Take any $\tau > 0$ and integrate \eqref{eq:evi} on $[0,\tau]$, we obtain
\begin{align}\label{eq:evi_2}
\int_{0}^{\tau} \left(\varphi\left(S(t)v\right) - \varphi(w)\right)\,dt
&\le -\frac{1}{2}\left( \|S(\tau)v - w\|^2_{L^2} -  \|v - w\|^2_{L^2}\right) \notag \\ 
&\le \frac{1}{2}\|v - w\|^2_{L^2}. 
\end{align}
Moreover, since $t \mapsto \varphi\left(S(t)v\right)$ is decreasing on $[0,\infty)$ (see Remark\,\ref{rem:contraction}\,(ii)), 
it follows that
$$
\int_{0}^{\tau} \left(\varphi\left(S(t)v\right) - \varphi(w)\right)\,dt
\ge \int_{0}^{\tau} \left(\varphi\left(S(\tau)v\right) - \varphi(w)\right)\,dt
= \tau \left(\varphi\left(S(\tau)v\right) - \varphi(w) \right).
$$
Therefore, we have
\begin{equation}\label{eq:evi_int}
 \varphi\left(S(\tau)v\right) - \varphi(w)  \le \frac{1}{2\tau}\|v - w\|^2_{L^2}
\end{equation}
for any $v, w \in D(\varphi)$. 
Here choose $v = u_0$ and $w = u_{min}$, then we obtain
$$
\varphi\left(S(\tau)u_0\right) - \varphi(u_{min}) \le \frac{1}{2\tau}\|u_0 - u_{min}\|^2_{L^2}
$$
and then
$$
\varphi\left(S(\tau)u_0\right) - \varphi(u_*) \le \frac{1}{2\tau}\|u_0 - u_{min}\|^2_{L^2} - \left( \varphi(u_*) - \varphi(u_{min}) \right).
$$
Here we put $\delta = \varphi(u_*) - \varphi(u_{min})\,(>0)$. 
Observe that $\frac{1}{2\tau}\|u_0 - u_{min}\|^2_{L^2} - \delta < 0$ implies $S(\tau)u_0 \not \in X$. 
Thus, if $\tau > \frac{\|u_0 -u_{min}\|^2}{2\delta}$, then $S(\tau)u_0 \not \in X$.
The proof is completed.
\end{proof}

From the convexity of $\varphi$, we obtain the following lemma:
\begin{lem}
Assume \eqref{assmpt:boundary}, \eqref{eq:expo_cond} and \eqref{assmpt:2.2.1}.
Let $u_0 \in D(\varphi)$ with $u_0 \neq u_{min}$.
Then it holds
\begin{equation}\label{eq:u_*_prop_0}
\varphi(S(t)u_0) < \varphi(u_0)
\quad
\text{if}
\quad
t > 0.
\end{equation}
\end{lem}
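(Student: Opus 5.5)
Since $t \mapsto \varphi(S(t)u_0)$ is nonincreasing (Remark\,\ref{rem:contraction}\,(ii)), I will argue by contradiction. Suppose $\varphi(S(t_0)u_0) = \varphi(u_0)$ for some $t_0 > 0$. Then $\varphi(S(t)u_0) = \varphi(u_0)$ for all $t \in [0,t_0]$. Because $u_0 \in D(\varphi) = D(\widetilde{\varphi})$, Theorem\,\ref{thm:glw} gives $u \coloneq S(\cdot)u_0 \in W^{1,2}_{loc}([0,\infty);L^2(\Omega))$, with $t\mapsto\varphi(u(t))$ absolutely continuous on $[0,\infty)$. Integrating the energy identity \eqref{eq:energy_decreasing} over $[0,t_0]$ yields
$$
\int_0^{t_0} \left\|\frac{du}{dt}\right\|^2_{L^2(\Omega)}\,dt = \varphi(u_0) - \varphi(u(t_0)) = 0 .
$$
Hence $\frac{du}{dt} = 0$ for a.e.\ $t\in(0,t_0)$. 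Since $u \in W^{1,2}(0,t_0;L^2(\Omega))$, it follows that $u(t) = u_0$ for all $t \in [0,t_0]$.

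Next I use the inclusion in Definition\,\ref{dfn:2.1}. For a.e.\ $t \in (0,t_0)$ we have $u(t)\in D(\partial\widetilde\varphi)$ and $-\frac{du}{dt}(t) \in \partial\widetilde{\varphi}(u(t))$. Fixing one such $t$ gives $0 \in \partial\widetilde{\varphi}(u_0)$. By the definition of the subdifferential, this means $\widetilde{\varphi}(v) \ge \widetilde{\varphi}(u_0)$ for every $v \in L^2(\Omega)$. In particular, $\varphi(v)\ge\varphi(u_0)$ for every $v \in W^{1,D}_g(\Omega)$, because under \eqref{assmpt:2.2.1} we have $W^{1,D}_g(\Omega) \subset L^2(\Omega)$ and $\widetilde\varphi=\varphi$ there. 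So $u_0$ is a minimizer of $\varphi$ over $g + W^{1,D}_0(\Omega)$.

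By the strict convexity of $\varphi$, the minimizer over this class is unique, so $u_0 = u_{min}$. This contradicts the hypothesis $u_0 \neq u_{min}$, and \eqref{eq:u_*_prop_0} follows.

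The main obstacle is justifying strict convexity of $\varphi$ on $W^{1,D}_g(\Omega)$, which the paper already uses to assert uniqueness of $u_{min}$. The integrand $\xi \mapsto (|\xi|^2+\mu)^{p/2}$ is strictly convex for $p>1$, $\mu\ge0$. So if $v_1\ne v_2$ were two minimizers, then $\nabla v_1 \ne \nabla v_2$ on a set of positive measure (since $v_1-v_2\in W^{1,p}_0(\Omega)$ and the Poincaré inequality applies). The midpoint $\frac{v_1+v_2}{2}$ would then have strictly smaller energy, which is impossible. Only this one-line uniqueness remark needs to be recorded.
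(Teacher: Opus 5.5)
Your proof is correct and follows the paper's argument: you argue by contradiction, show that $S(t)u_0$ is stationary on $[0,t_0]$, deduce $0 \in \partial\widetilde{\varphi}(u_0)$, and conclude by uniqueness of the minimizer. The one difference is that you get $S(t)u_0 = u_0$ by integrating the energy identity \eqref{eq:energy_decreasing}, whereas the paper uses the integrated evolution variational inequality \eqref{eq:evi_2} with $v = w = u_0$; both routes are valid, and your explicit strict-convexity argument for uniqueness spells out what the paper compresses into ``since $\varphi$ is convex''.
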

\begin{proof}
Let $u_0 \in D(\varphi)$ such that $u_0 \neq u_{min}$.
By Remark\,\ref{rem:contraction}\,(ii), 
we have $\varphi(S(t)u_0) \le \varphi(u_0)$ for any $t \in [0,\infty)$.
Argue by contradiction.
Suppose there exists $T>0$ such that $\varphi(S(T)u_0) = \varphi(u_0)$.
Then it holds that $\varphi(S(t)u_0) = \varphi(u_0)$ for any $t \in [0,T]$.
In \eqref{eq:evi_2} put $v = u_0$, $w = u_0$ and $\tau = t$ \,($t \in (0,T]$), then we have
\begin{align}
  0 = \int_{0}^{t} \left(\varphi\left(S(s)u_0\right) - \varphi(u_0)\right)\,ds
\le -\frac{1}{2}\|S(t)u_0 - u_0\|^2_{L^2}.
\end{align}
Therefore, $S(t)u_0 = u_0$ for any $t \in [0,T]$ 
and $\frac{d}{dt}\left(S(\cdot)u\right)(t) = 0$ for any $t \in (0,T)$.
By Definition\,\ref{dfn:2.1}, there exists $\tau \in (0,T)$ such that
$\frac{d}{dt}\left(S(\cdot)u\right)(\tau) + \partial{\varphi}(S(\tau)u_0) \ni 0$.
Then we have $0 \in \partial{\varphi}(S(\tau)u_0)$.
However, since $\varphi$ is convex, $u_0 = S(\tau)u_0 = u_{min}$.
This is absurd. 
The proof is completed.
\end{proof}

Using the above lemma, we prove Theorem\,\ref{thm:fti}\,(ii).
\begin{proof}[PROOF of Theoerm\,\ref{thm:fti}\,(ii)]
Note that $u_* \neq u_{min}$ by \eqref{assmpt:Lav}.
Then \eqref{eq:u_*_prop_0} implies
 \begin{align}\label{eq:u_*_prop}
\varphi\left(S(t)u_*\right) < \varphi(u_*)
\quad
\text{if}
\quad
t > 0.
\end{align}

In \eqref{eq:evi_int}, we choose $\tau = \frac{\varepsilon}{2},\,v = S\left(\frac{\varepsilon}{2}\right)u_0$ and $w = S\left(\frac{\varepsilon}{2}\right)u_*$.
Then we obtain
\begin{align}
\varphi\left(S(\varepsilon)u_0\right) - \varphi\left(S\left(\frac{\varepsilon}{2}\right)u_*\right)
&= \varphi\left(S\left(\frac{\varepsilon}{2}\right)S\left(\frac{\varepsilon}{2}\right)u_0\right) - \varphi\left(S\left(\frac{\varepsilon}{2}\right)u_*\right)\\
&\le \frac{\|S\left(\frac{\varepsilon}{2}\right)u_0 - S\left(\frac{\varepsilon}{2}\right)u_*\|^2_{L^2}}{\varepsilon}
\le \frac{\|u_0 - u_*\|^2_{L^2}}{\varepsilon}.
\end{align}
by Remark\,\ref{rem:contraction}\,(i).
Therefore, it follows that, for any $t \ge \varepsilon$,
\begin{align}
\varphi\left(S(t)u_0\right)- \varphi\left(u_*\right)
&\le \varphi\left(S(\varepsilon)u_0\right)- \varphi\left(u_*\right) \\
&\le \frac{\|u_0 - u_*\|^2_{L^2}}{\varepsilon} - \left(\varphi\left(u_*\right) -\varphi\left(S\left(\frac{\varepsilon}{2}\right)u_*\right)\right)
\end{align}
since $\varphi\left(S(t)u_0\right)$ is decreasing.
Note that $\varphi\left(u_*\right) -\varphi\left(S\left(\frac{\varepsilon}{2}\right)u_*\right) > 0$ by \eqref{eq:u_*_prop}.
Moreover, since the inequality
$$
\frac{\|u_0 - u_*\|^2_{L^2}}{\varepsilon} - \left(\varphi\left(u_*\right) -\varphi\left(S\left(\frac{\varepsilon}{2}\right)u_*\right)\right) < 0
$$
is equivalent to 
$$
\|u_0 - u_*\|_{L^2} < \sqrt{\varepsilon\left(\varphi\left(u_*\right) -\varphi\left(S\left(\frac{\varepsilon}{2}\right)u_*\right)\right)},
$$
we may choose $\eta = \sqrt{\varepsilon\left(\varphi\left(u_*\right) -\varphi\left(S\left(\frac{\varepsilon}{2}\right)u_*\right)\right)}$.  
The proof is completed.
\end{proof}

\begin{rem}\label{rem:boundedness}
In the above argument, it is sufficient that $u_*$ and $u_{min}$ belong to $D(\widetilde{\varphi})$.
Thus we may assume the boundedness of the boundary data $g$ instead of \eqref{assmpt:2.2.1}.
Indeed, if $g$ belongs to $L^{\infty}(\Omega)$, it follows that $u_*$ and $u_{min}$ also belong to $L^{\infty}(\Omega)$ due to the fact that they are minimizers.
Therefore, $u_*$ and $u_{min}$ belong to $D(\widetilde{\varphi})$.
\end{rem}

\section{Short-time persistence of smooth approximability}

\subsection{Local well-posedness in $C^{-1,\beta}$}
The aim of this section is to prove Theorem\,\ref{thm:ftpr}.
For this purpose, in this subsection we first construct a relatively regular local-in-time solution.
For the construction of such a solution, we use the analytic semigroups generation result in the space $C^{-1,\beta}(\overline{\Omega})$\,\cite{Ve} and the local existence result for abstract fully nonlinear equations \cite{Lu0}.

Throughout this section, we assume \eqref{eq:expo_cond} and \eqref{assmpt:3.1}.
We reduce \eqref{eq:main} to the following homogeneous Dirichlet problem using the given regular function $g$.
\begin{equation}\label{eq:main_hom}
\left\{
\begin{aligned}
\partial_t v = \mathrm{div}f(x,\nabla v + \nabla g)
\quad
&\text{in}\,\Omega \times (0,\infty),\\
v = 0 \quad &\text{on}\, \partial\Omega \times (0,\infty),\\
v(0) = u_0 - g\quad &\text{in}\,\Omega,
\end{aligned}
\right.
\end{equation}
where 
$$
f(x,\xi) = p(|\xi|^2 + \mu)^{\frac{p-2}{2}}\xi + qa(x)(|\xi|^2 + \mu)^{\frac{q-2}{2}}\xi
\quad
\text{for}
\quad
(x,\xi) \in \overline{\Omega} \times \R^n.
$$
Note that for each $x \in \overline{\Omega}$,
we have $f(x,\cdot) \in C^{\infty}(\R^n;\R^n)$ since $\mu > 0$.
Moreover, it holds that for each $1 \le i,j \le n$,
\begin{align}
(D_{\xi}f(x,\xi))_{i,j}
&\coloneq
 p\left\{ \left(|\xi|^2 + \mu \right)^{\frac{p-2}{2}}\delta_{ij} + (p-2)\left(|\xi|^2 + \mu \right)^{\frac{p-4}{2}}\xi_i\xi_j \right\} \\
&+ qa(x)\left\{ \left(|\xi|^2 + \mu \right)^{\frac{q-2}{2}}\delta_{ij} + (q-2)\left(|\xi|^2 + \mu \right)^{\frac{q-4}{2}}\xi_i\xi_j \right\},
\end{align}
where $\delta_{ij}$ is Kronecker's delta.


We now formulate the equation \eqref{eq:main_hom} as an abstract fully nonlinear equation 
by regarding $\mathrm{div}f(x,\nabla v + \nabla g)$ 
as a nonlinear mapping from $C^{1,\beta}(\overline{\Omega})$ to $C^{-1,\beta}(\overline{\Omega})$.
\begin{rem}
One might try to split the divergence term and formulate \eqref{eq:main_hom} as an abstract quasilinear equation as follows;
\begin{align}
\mathrm{div}f(x,\nabla v + \nabla g)
&= \mathrm{div}(p|\nabla v + \nabla g|^2 + \mu)^{\frac{p-2}{2}}\nabla v + qa(x)(|\nabla v + \nabla g|^2 + \mu)^{\frac{q-2}{2}}\nabla v \\
&+ \mathrm{div}(p|\nabla v + \nabla g|^2 + \mu)^{\frac{p-2}{2}}\nabla g + qa(x)(|\nabla v + \nabla g|^2 + \mu)^{\frac{q-2}{2}}\nabla g \\
&\eqcolon -A(v)v + B(v).
\end{align}
In this case, 
the resulting operator $A$ and forcing term $B$ would generally have lower regularity than that required by the local existence theorem for abstract quasilinear equations at least in our setting;
see \cite{Am1,Lu-1,MaW,PrS} for example.
\end{rem}

Fix any $\beta \in (0,1)$ such that $0 < \beta < \min\{\alpha,\gamma\}$.
Set
\begin{align}
  X_0 
  &\coloneq C^{-1,\beta}(\overline{\Omega}) \\
  \coloneq \{&v \in (W^{1,2}_0(\Omega))^*:\,\text{there exist}\,v_0,\ldots,v_n \in  C^{0,\beta}(\overline{\Omega})\\
  &\text{such that}\,v = v_0 + \sum_{i = 1}^{n}D_i v_i\,\text{in the sense of distribution}\},\\
  X_1 &\coloneq \left\{v \in C^{1,\beta}(\overline{\Omega}): v|_{\partial \Omega} = 0 \right\}.
\end{align}
$X_1$ is endowed with the $C^{1,\beta}(\overline{\Omega})$ norm,
and $X_0$ is endowed with the following norm
\begin{align}
  \|v\|_{C^{-1,\beta}(\overline{\Omega})}
  \coloneq
  \inf\left\{\sum_{i = 0}^{n}\|v_i\|_{C^{0,\beta}(\overline{\Omega})}:\,v = v_0 + \sum_{i = 1}^{n}D_i v_i,\,\text{where}\,v_0,\ldots,v_n \in  C^{1,\beta}(\overline{\Omega})\right\}.
\end{align}
We remark that $X_0$ and $X_1$ are Banach spaces.
Moreover, under the natural identification, $X_1$ is the subspace of $X_0$ 
and, moreover, $X_1$ is continuously embedded in $X_0$.
Now we let $F:X_1 \rightarrow X_0$ by
\begin{align}
  F(v) = -\mathrm{div}f(x,\nabla v + \nabla g).
\end{align}
One can see that $F:X_1 \rightarrow X_0$ is Fréchet differentiable.
Moreover, denoting by $dF(v)$ the Fréchet derivative of $F$ at the point $v \in X_1$,
it holds that
\begin{align}
  dF(v)(w) = -\mathrm{div}\left( D_{\xi}f(x,\nabla v + \nabla g) \cdot \nabla w \right)
  \quad
  \text{for any}
  \quad
  v,\,w \in X_1.
\end{align}
Observe that for any $v \in X_1$, 
the each exponent of $D_{\xi}f(x,\nabla v + \nabla g)$ belongs to $C^{0,\beta}(\overline{\Omega})$
and that $D_{\xi}f(x,\nabla v + \nabla g)$ satisfies the ellipticity condition
\begin{align}
D_{\xi}f(x,\nabla v + \nabla g) \eta \cdot \eta \ge \nu|\eta|^2
\quad
\text{for any}
\quad
x \in \overline{\Omega},\,
\eta \in \R^n
\end{align}
with $\nu > 0$, which depends on $p, \mu, \|\nabla v + \nabla g\|_{L^{\infty}(\Omega)}$.
Therefore, the operator $dF(v):X_1 \subset X_0 \rightarrow \mathcal{L}(X_1,X_0)$ generates an analytic semigroup by \cite[Theorem\,5.2]{Ve}.
And the graph norm of $dF(v)$ is equivalent to the norm $\|\cdot\|_{X_1}$.
Furthermore, the Fr\'{e}chet derivative $dF$ belongs to $C^{0,1}_{loc}(X_1,\mathcal{L}(X_1,X_0))$, i.e., for any $v \in X_1$ and any $R>0$, 
there exists $L = L(\mu,v,R) > 0$ such that
\begin{align}
  \|dF(v_1) - dF(v_2)\|_{\mathcal{L}(X_1,X_0)}
  \le L\|v_1 - v_2\|_{X_0}
\end{align}
for any $v_1,v_2 \in \{w \in X_1: \|v - w\| < R\}$.


Now we consider the following abstract equation;
\begin{align}\label{eq:main_abst}
  v'(t) = F(v(t))
  \quad
  0 \le t \le T,
  \quad
  v(0) = v_0.
\end{align}
Then, for a suitable initial value $v_0$, 
\cite[Theorem\,8.11]{Lu0} yields the existence of $T>0$ and $v \in C^1([0,T];X_0) \cap C([0,T];X_1)$
satisfying \eqref{eq:main_abst} in $X_0$.
More precisely, we have the following proposition.
\begin{prop}\label{prop:lwp}
Let $\overline{v} \in X_1$ be such that $F(\overline{v}) \in \overline{X_1}^{X_0}$\,,and let $\theta \in (0,1)$.
Then there exist $T = T(\overline{v}) > 0$ and $r = r(\overline{v}) > 0$ such that
the following assertions hold;
for any $v_0 \in X_1$ with $F(v_0) \in \overline{X_1}^{X_0}$ 
and $\|v - v_0\| \le r$, 
there exists $v \in C^1([0,T];\,X_0) \cap C([0,T];X_1)$ for any $\theta \in (0,1)$, 
which satisfies \eqref{eq:main_abst} in $[0,T]$.
Moreover, $v \in C^{\theta}_{\theta}((0,T];X_1)$ and $v' \in B_{\theta}((0,T];(X_0,X_1)_{\theta,\infty})$.
Here $C^{\theta}_{\theta}$ and $B_{\theta}$ are weighted H\"{o}lder and bounded functional spaces, respectively, see \cite[Chapter\,4]{Lu0} for the precise definition.
In addition, $(X_0.X_1)_{\theta,\infty}$ is an interpolations space, see \cite{Lu0,Ve}.
\end{prop}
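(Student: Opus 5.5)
The plan is to reduce Proposition~\ref{prop:lwp} to the abstract local existence theorem for fully nonlinear equations, \cite[Theorem\,8.11]{Lu0}, by checking its hypotheses for $F:X_1\to X_0$. That theorem applies to $v'=F(v)$ when three conditions hold near $\overline{v}$. First, $F$ is $C^1$ with locally Lipschitz derivative on a neighbourhood $\mathcal{O}\subset X_1$ of $\overline{v}$. Second, for every $v\in\mathcal{O}$ the operator $dF(v)$ (with the sign convention of \cite{Lu0}, i.e.\ the linearization of the right-hand side) is sectorial in $X_0$ with domain $X_1$ and graph norm equivalent to $\|\cdot\|_{X_1}$. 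Third, the compatibility condition $F(v_0)\in\overline{D(dF(v_0))}^{X_0}=\overline{X_1}^{X_0}$ holds at the initial datum. Given these, the theorem returns $T,r>0$ and a solution in $C^1([0,T];X_0)\cap C([0,T];X_1)$ whose weighted H\"older regularity $C^\theta_\theta((0,T];X_1)$, $v'\in B_\theta((0,T];(X_0,X_1)_{\theta,\infty})$ follows directly from the statement in \cite[Chapter\,8]{Lu0}.

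\textbf{Step 1: differentiability of $F$.} Since $\mu>0$, the map $\xi\mapsto f(x,\xi)$ is $C^\infty$, and its $\xi$-derivatives of every order are bounded on bounded sets, uniformly in $x$. The $x$-dependence enters only through $a\in C^{0,\alpha}$, and $\alpha>\beta$. The Nemytskii operator $w\mapsto f(\cdot,\nabla w+\nabla g)$ therefore maps $C^{1,\beta}$ into $(C^{0,\beta})^n$ and is $C^1$ (indeed $C^2$) there. The same holds for $w\mapsto D_\xi f(\cdot,\nabla w+\nabla g)$. To prove this I would use Taylor's formula with integral remainder in $\xi$, together with the standard estimate $[h\circ\psi]_{\beta}\le \|Dh\|_\infty[\psi]_\beta$ for composition with a H\"older map. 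Composing with $-\mathrm{div}$, which maps $(C^{0,\beta})^n$ into $X_0$ with norm at most one by the definition of the $C^{-1,\beta}$ norm, gives Fr\'echet differentiability of $F$ with the stated formula for $dF$. The bound $\|D_\xi f(\cdot,\nabla v_1+\nabla g)-D_\xi f(\cdot,\nabla v_2+\nabla g)\|_{C^{0,\beta}}\le L\|v_1-v_2\|_{X_1}$ on bounded sets yields the local Lipschitz property of $dF$. It requires a $C^2$ bound on $f$ in $\xi$, which is again available since $\mu>0$.

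\textbf{Step 2: generation of an analytic semigroup.} For fixed $v$, the coefficient matrix $D_\xi f(x,\nabla v+\nabla g)$ is symmetric and belongs to $C^{0,\beta}(\overline{\Omega})$. It is uniformly elliptic, with constant $\nu$ depending on $p,\mu,\|\nabla v+\nabla g\|_\infty$, because $a\ge0$ and the $p$-part has smallest eigenvalue at least $p\min\{1,p-1\}(|\xi|^2+\mu)^{(p-2)/2}>0$. Together with $\partial\Omega\in C^{3,\gamma}$, these are exactly the hypotheses of \cite[Theorem\,5.2]{Ve}. That theorem shows that the divergence-form Dirichlet realization generates an analytic semigroup on $C^{-1,\beta}(\overline{\Omega})$ with domain $\{w\in C^{1,\beta}:w|_{\partial\Omega}=0\}=X_1$ and an equivalent graph norm. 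Sectoriality is uniform on a small $X_1$-ball around $\overline{v}$, since sectoriality is stable under small perturbations in $\mathcal{L}(X_1,X_0)$ and Step 1 makes $dF$ continuous.

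\textbf{Step 3: conclusion.} I would apply \cite[Theorem\,8.11]{Lu0} with the neighbourhood $\mathcal{O}=B_{X_1}(\overline{v},R)$. This gives $T(\overline{v})$ and $r(\overline{v})$ uniform for all $v_0$ with $\|v_0-\overline{v}\|_{X_1}\le r$ satisfying the compatibility condition, which I read as the intended meaning of the condition "$\|v-v_0\|\le r$" in the statement.

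The main obstacle I expect is Step 2: confirming that Verde's generation theorem applies with merely $C^{0,\beta}$ coefficients and a $C^{3,\gamma}$ boundary, and that its domain is exactly $X_1$, with no hidden extra regularity. The nonstandard $(p,q)$ growth plays no role here, because everything is evaluated at a fixed $C^{1,\beta}$ gradient, which is bounded. The regularity of $a$ limits us to $\beta<\alpha$, which is why $\beta<\min\{\alpha,\gamma\}$ was fixed.
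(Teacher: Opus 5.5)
Your proposal is correct and follows the paper's route. Like the paper, you assert that $F$ is Fr\'echet differentiable with locally Lipschitz derivative, get sectoriality of the linearization (domain $X_1$, equivalent graph norm) from \cite[Theorem\,5.2]{Ve} using uniform ellipticity of $D_{\xi}f(x,\nabla v+\nabla g)$ with $C^{0,\beta}$ entries, and then invoke \cite[Theorem\,8.11]{Lu0} under the compatibility condition $F(v_0)\in\overline{X_1}^{X_0}$. You also supply the Nemytskii-operator details the paper leaves implicit and fix its slips: the sign of $F$, so the linearization is $\mathrm{div}(D_{\xi}f\,\nabla\cdot)$ rather than its negative; the Lipschitz bound in $\|v_1-v_2\|_{X_1}$ rather than $\|v_1-v_2\|_{X_0}$; and the reading $\|v_0-\overline{v}\|_{X_1}\le r$.
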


We give the characterization of $\overline{X_1}^{X_0}$.
\begin{lem}\label{lem:density}
Assume $\partial\Omega \in C^{1,1}$
and then it holds that
\begin{align}
  \overline{X_1}^{X_0}
  &=
  \{v \in X_0:\,\text{there exist}\,v_0,\ldots,v_n \in  h^{0,\beta}(\overline{\Omega})
  \,\text{such that}\,v = v_0 + \sum_{i = 1}^{n}D_i v_i\}.
\end{align}
Here $h^{0,\beta}(\overline{\Omega})$ is the little H\"{o}lder space\,(see \cite{Lu0,MoMO} 
and the references therein), namely,
\begin{align}
  h^{0,\beta}(\overline{\Omega})
  \coloneq \left\{f \in C^{0,\beta}(\overline{\Omega}): 
  \lim_{\delta \rightarrow 0} \sup_{0 < |x-y| \le \delta,\,x,y \in \overline{\Omega}}\frac{|f(x)-f(y)|}{|x-y|^\beta} = 0 \right\}.
\end{align}
\end{lem}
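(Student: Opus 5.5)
We prove the two inclusions separately, writing $Y$ for the right-hand side.

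\textbf{The inclusion $\overline{X_1}^{X_0} \subset Y$.} Since $h^{0,\beta}(\overline{\Omega})$ is a closed subspace of $C^{0,\beta}(\overline{\Omega})$, the set $Y$ is a closed subspace of $X_0$. Indeed, if $v_k \in Y$ and $v_k \to v$ in $X_0$, pass to a subsequence with $\|v_{k+1}-v_k\|_{X_0} < 2^{-k}$. Choose representations $(w^k_i)$ of $v_{k+1}-v_k$ with $\sum_i \|w^k_i\|_{C^{0,\beta}} < 2^{-k}$. One has to check that these representations can be taken in $h^{0,\beta}$; this follows if we show $Y$ equals the closure of $C^\infty$-representable distributions, see below. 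The telescoping series then converges in $h^{0,\beta}$ to a representation of $v$.

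Next we need $X_1 \subset Y$. Take $v \in X_1$, and write $v = v_0 + \sum_i D_i v_i$ with $v_0 = v$ and $v_i = 0$; then use $C^{1,\beta} \subset C^{0,\beta'}$ for $\beta' > \beta$, which gives $v \in h^{0,\beta}$. Hence $X_1 \subset Y$, and closedness gives the first inclusion.

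\textbf{The inclusion $Y \subset \overline{X_1}^{X_0}$.} This is the substantive direction. Take $v = v_0 + \sum_i D_i v_i$ with $v_i \in h^{0,\beta}$.

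\emph{Step 1.} Use the standard characterization that $h^{0,\beta}(\overline{\Omega})$ is the $C^{0,\beta}$-closure of $C^\infty(\overline{\Omega})$; this holds for, e.g., Lipschitz domains, via extension to $\R^n$ and mollification. So choose $v_i^\epsilon \in C^\infty(\overline{\Omega})$ with $v_i^\epsilon \to v_i$ in $C^{0,\beta}$. Then $v^\epsilon \coloneq v_0^\epsilon + \sum_i D_i v_i^\epsilon \to v$ in $X_0$, by the definition of the quotient norm. This reduces the problem to approximating $v^\epsilon \in C^\infty(\overline{\Omega})$ viewed as an element of $X_0$. Note that $v^\epsilon$ is smooth but need not vanish on $\partial\Omega$.

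\emph{Step 2.} Approximate a smooth $h$ by elements of $X_1$ in the $X_0$-norm. Let $h_\delta = \chi_\delta h$, where $\chi_\delta$ is a cutoff equal to $1$ at distance $\ge \delta$ from $\partial\Omega$, equal to $0$ near $\partial\Omega$, and satisfying $|\nabla\chi_\delta| \lesssim \delta^{-1}$. Then $h_\delta \in X_1$, and we must show $(1-\chi_\delta)h \to 0$ in $C^{-1,\beta}$. Write $(1-\chi_\delta)h = \sum_i D_i w_i^\delta$. Using $\partial\Omega \in C^{1,1}$ and the distance function $d$, the natural choice is
\[
w^\delta = \nabla d \int_0^{d(x)} (1-\chi_\delta)h \, ds
\]
in tubular coordinates, i.e.\ $D_i w_i$ equals the function up to an $L^\infty$-small remainder. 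Since $h$ is bounded on a strip of width $\delta$, we get $|w^\delta| \lesssim \delta$ and $|\nabla w^\delta| \lesssim 1$. Interpolation then gives $[w^\delta]_{C^{0,\beta}} \lesssim \delta^{1-\beta} \to 0$. The remainder term $v_0$ is $O(\delta)$ in sup norm, supported in the strip, with gradient $O(1/\delta)$, so its $C^{0,\beta}$ norm is also $\lesssim \delta^{1-\beta}$.

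The main obstacle is this boundary-layer construction: making $w^\delta$ precise near $\partial\Omega$ via the $C^{1,1}$ tubular neighborhood, which is where the regularity assumption on $\partial\Omega$ is used. A diagonal argument in $\epsilon$ and $\delta$ then finishes the proof.

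Finally, the closedness of $Y$ in the first part is cleanest after this second part: $Y$ equals the closure of smooth functions, hence of $X_1$. So we argue the second direction first and then note that $X_1 \subset Y$ gives equality.
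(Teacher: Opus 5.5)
There is a genuine gap in your inclusion $\overline{X_1}^{X_0}\subset Y$, and the repair you propose is circular. Closedness of $h^{0,\beta}$ in $C^{0,\beta}$ does not make $Y$ closed in $X_0$. The space $X_0$ is the quotient of $(C^{0,\beta}(\overline{\Omega}))^{n+1}$ by the kernel of $(v_0,\dots,v_n)\mapsto v_0+\sum_i D_iv_i$, and the image of a closed subspace under a quotient map need not be closed. In your telescoping step, an element of $Y$ with small $X_0$-norm is only known to have \emph{some} small representation in $C^{0,\beta}$ and \emph{some} (possibly large) representation in $h^{0,\beta}$; you need a small one in $h^{0,\beta}$. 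Your second part proves only $X_1\subset Y\subset\overline{X_1}^{X_0}$. That does not give $Y=\overline{X_1}^{X_0}$ unless $Y$ is already known to be closed, and the claim ``$Y$ equals the closure of smooth functions'' that you fall back on is exactly the inclusion still to be proved.

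The missing idea is a bounded linear choice of representatives. The paper gets it from Schauder theory for the Dirichlet Laplacian in divergence form \cite[Theorems~8.33, 8.34]{GT}: every $f\in X_0$ equals $\Delta w=\sum_i D_i(D_iw)$ for a unique $w\in C^{1,\beta}(\overline{\Omega})$ with $w|_{\partial\Omega}=0$ and $\|w\|_{C^{1,\beta}}\le c\|f\|_{X_0}$. Suppose $f_k\in X_1$ and $f_k\to f$ in $X_0$. Then $f_k\in C^{0,\gamma}$ for some $\gamma>\beta$, so $\nabla w_k\in C^{0,\gamma}\subset h^{0,\beta}$, while $\nabla w_k\to\nabla w$ in $C^{0,\beta}$. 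Closedness of $h^{0,\beta}$ gives $\nabla w\in h^{0,\beta}$, i.e.\ $f\in Y$. The same argument shows each $f\in Y$ has an $h^{0,\beta}$-representation of norm at most $c\|f\|_{X_0}$, which is exactly what your telescoping series needs.

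Your other inclusion takes a genuinely different route from the paper. The paper solves $\Delta w=f$ in $W^{2,s}\cap W^{1,s}_0$ with $1-n/s=\beta$, obtains $\|f\|_{X_0}\le c\|f\|_{L^s}$, and then approximates a smooth $f$ in $L^s$ by $C_c^\infty(\Omega)$ functions. As written, however, your boundary-layer step fails under $\partial\Omega\in C^{1,1}$. There $d$ is only $C^{1,1}$, so $\Delta d$ is merely bounded and can jump (e.g.\ across normals through points where the curvature of $\partial\Omega$ jumps). Hence the remainder $-W\Delta d$ in $\mathrm{div}(W\nabla d)=(1-\chi_\delta)h+W\Delta d$ need not lie in $C^{0,\beta}$ at all, and your gradient bound $O(1/\delta)$ is unfounded. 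Also, $W=\int_0^{d}(1-\chi_\delta)h\,ds$ is not supported in the strip: it is constant, and generally nonzero, along normals where $d>\delta$. It therefore has to be cut off at a fixed distance inside the tubular neighbourhood, which produces further remainder terms. The paper's estimate $\|r\|_{X_0}\le c\|r\|_{L^s}$ absorbs any $L^s$-small remainder and fixes all of this. With that estimate in hand, though, the boundary layer is unnecessary, since $\|(1-\chi_\delta)h\|_{L^s}\to 0$ directly.
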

\begin{proof}[PROOF]
We first prove $h^{-1,\beta}(\overline{\Omega}) \subset \overline{X_1}^{X_0}$.
We divide the proof into three steps.
We first show $C^{\infty}(\overline{\Omega}) \subset \overline{X_1}^{X_0}$.
Let us take any $f \in C^{\infty}(\overline{\Omega})$.
We choose $s \in (n,\infty)$ such that $1 - \frac{n}{s} = \beta$
and take the sequence $\{f_k\}_{k=1}^{\infty} \subset C_c^{\infty}(\Omega)$ such that $f_k \rightarrow f$ in $L^s(\Omega)$.
By \cite[Theorem\,9.15]{GT}, 
there exist $w, w_k \in W^{2,s}(\Omega) \cap W^{1,s}_0(\Omega)$ 
such that $\Delta w = f$ in $\Omega$ and $\Delta w_k = f_k$ in $\Omega$.
Using the Sobolev embedding and \cite[Lemma\,9.17]{GT}, 
we have $\nabla w,\,\nabla w_k \in C^{0,\beta}(\Omega)$ and 
\begin{align}
  \|f_k - f\|_{C^{-1,\beta}(\overline{\Omega})}
  &\le \|\nabla w_k - \nabla w\|_{C^{0,\beta}(\overline{\Omega})} \\
  &\le c\|\nabla w_k - \nabla w\|_{W^{1,s}(\Omega)} \\
  &\le c\|w_k - w\|_{W^{2,s}(\Omega)}
  \le c\|f_k - f\|_{L^s(\Omega)},
\end{align}
where $c$ is a positive constant independent of $k$.
Therefore, letting $k \rightarrow \infty$, 
we obtain $f_k \rightarrow f$ in $X_0$.
Since $f_k \in C_c^{\infty}(\Omega) \subset X_1$, it follows that $f \in \overline{X_1}^{X_0}$.

Moreover, one can verify that $\overline{C^{\infty}(\overline{\Omega})}^{C^{0,\beta}(\overline{\Omega})} 
= h^{0,\beta}(\overline{\Omega})$.
Indeed, we have clearly $C^{\infty}(\overline{\Omega}) \subset h^{0,\beta}(\overline{\Omega})$.
Since $h^{0,\beta}(\overline{\Omega})$ is the closed subspace of $C^{0,\beta}(\overline{\Omega})$ (see the argument of \cite[Proposition\,0.2.1]{Lu0}), 
we obtain 
$\overline{C^{\infty}(\overline{\Omega})}^{C^{0,\beta}(\overline{\Omega})} 
\subset h^{0,\beta}(\overline{\Omega})$. 
In order to obtain the other inclusion, we take any $f \in h^{0,\beta}(\overline{\Omega})$.
By \cite[Corollary 1.8]{MoMO}, there exists a continuous function $\widetilde{f}$ defined in $\R^n$, which is 
an extension of $f$ and satisfies
\begin{align}
 \sup_{x,y \in \R^n,\,x \neq y}\frac{|\widetilde{f}(x)-\widetilde{f}(y)|}{|x-y|^\beta} < \infty
 \quad
 \text{and}
 \quad
 \lim_{\delta \rightarrow 0}\sup_{0 < |x-y| \le \delta,\,x,y\in \R^n} \frac{|\widetilde{f}(x)-\widetilde{f}(y)|}{|x-y|^\beta} = 0.
\end{align}
Then one can see that, letting $\{\rho_k\}$ be a sequence of mollifiers, 
$\rho_k * \widetilde{f} \rightarrow f$ in $C^{0,\beta}(\overline{\Omega})$ as $k \rightarrow \infty$, see the argument of \cite[Proposition\,0.2.1]{Lu0} again.
Since $\rho_k * \widetilde{f} \in C^{\infty}(\overline{\Omega})$, it follows that $f \in \overline{C^{\infty}(\overline{\Omega})}^{C^{0,\beta}(\overline{\Omega})}$.

We now derive the conclusion.
For any $f \in h^{-1,\beta}(\overline{\Omega})$, 
there exist $f_0,\ldots,f_n \in h^{0,\beta}(\overline{\Omega})$ such that
$f = f_0 + \sum_{i=1}^{n}D_if_i$ in the distributional sense.
Since we have $\overline{C^{\infty}(\overline{\Omega})}^{C^{0,\beta}(\overline{\Omega})} 
= h^{0,\beta}(\overline{\Omega})$,
there exist $f_{0,k},\ldots,f_{n,k} \in C^{\infty}(\overline{\Omega})$
such that $f_{i,k} \rightarrow f_{i}$ in $C^{0,\beta}(\overline{\Omega})$ 
for each $i \in \{0,\ldots,n\}$.
Then we put $f_k = f_{0,k} + \sum_{i=1}^{n}D_i f_{i,k} \in C^{\infty}(\overline{\Omega})$. Note that $f_k \in \overline{X_1}^{X_0}$.
And we observe that $f_k \rightarrow f$ in $X_0$. This yields $f \in \overline{X_1}^{X_0}$.

Next we check the other inclusion:\,$\overline{X_1}^{X_0} \subset h^{-1,\beta}(\overline{\Omega})$.
Let us take any $f \in \overline{X_1}^{X_0}$.
Then there exists $\{f_k\}_{k=1}^{\infty} \subset X_1$ such that
$f_k \rightarrow f$ in $X_0$ as $k \rightarrow \infty$.
In particular, we note that $f_k \in C^{-1,\gamma}(\overline{\Omega})$.
Then by \cite[Theorem\,8.34]{GT},
there uniquely exists $w_k \in C^{1,\gamma}(\overline{\Omega})$ with $w_k |_{\partial \Omega} = 0$ such that $\mathrm{div}(\nabla w_k) = f_k$ in the distributional sense.
Similarly, there uniquely exists $w \in X_1$ such that $\mathrm{div}(\nabla w) = f$ in the distributional sense.
Moreover, \cite[Theorem\,8.33]{GT} yields
\begin{align}
  \|\nabla w - \nabla w_k\|_{0,\beta}
  \le \|w - w_k\|_{1,\beta}
  \le c\|f - f_k\|_{-1,\beta},
\end{align}
where the positive constant $c$ is independent of $k$.
Letting $k \rightarrow \infty$, we obtain $\nabla w_k \rightarrow \nabla w$ 
in $ C^{0,\beta}(\overline{\Omega})$.
Since $\nabla w_k \in C^{0,\gamma}(\overline{\Omega}) \subset h^{0,\beta}(\overline{\Omega})$ and 
$h^{0,\beta}(\overline{\Omega})$ is the closed subspace of $C^{0,\beta}(\overline{\Omega})$, 
it follows that $\nabla w \in h^{0,\beta}(\overline{\Omega})$ and $f \in h^{-1,\beta}(\overline{\Omega})$.
The proof is completed.
\end{proof}

For the space $(X_0,X_1)_{\theta,\infty}$, the following result holds (\cite[Theorem\,6.1]{Ve}).
\begin{thm}[\cite{Ve}]\label{thm:interpolation}
Let $\Omega$ be a bounded open set in $\R^N$ with $C^{3,\beta}$ boundary.
Then it holds that, under the natural identifications,
\begin{align}
  (X_0,X_1)_{\theta,\infty}
  =
  \left\{
  \begin{aligned}
  C^{-1,2\theta + \beta}(\overline{\Omega})
  \quad
  &\text{if}
  \quad
  0 < \theta < \frac{1-\beta}{2},\\
  \{v \in C^{0,2\theta + \beta-1}(\overline{\Omega}): v|_{\partial \Omega} = 0 \}
  \quad
  &\text{if}
  \quad
   \frac{1-\beta}{2} < \theta < 1 - \frac{\beta}{2},\\
   \{v \in C^{1,2\theta + \beta-2}(\overline{\Omega}): v|_{\partial \Omega} = 0 \}
    \quad
  &\text{if}
  \quad
   1 - \frac{\beta}{2} < \theta < 1.\\
  \end{aligned}
  \right.
\end{align}
\end{thm}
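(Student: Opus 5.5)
The natural route is to transfer the problem by the Dirichlet Laplacian to a pair of H\"{o}lder spaces of higher order. There the interpolation spaces are classical, and the trace conditions in the statement appear as the boundary conditions of the part of the operator. Let $A \coloneq -\Delta$ with homogeneous Dirichlet condition, regarded as an operator in $X_0 = C^{-1,\beta}(\overline{\Omega})$ with domain $X_1$.

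\textbf{Step 1: $A$ is an isomorphism $X_1 \to X_0$.} By the Schauder theory in divergence form (\cite[Theorem\,8.33--8.34]{GT}, as already used in Lemma\,\ref{lem:density}), $A$ is an isomorphism from $X_1$ onto $X_0$. Since $\partial\Omega \in C^{3,\beta}$, the same operator is also an isomorphism from $\{u \in C^{2,\beta}(\overline{\Omega}) : u|_{\partial\Omega} = 0\}$ onto $C^{0,\beta}(\overline{\Omega})$. It is further an isomorphism from
\[
Y_1 \coloneq \{u \in C^{3,\beta}(\overline{\Omega}) : u|_{\partial\Omega} = 0,\ \Delta u|_{\partial\Omega} = 0\}
\]
onto $\{f \in C^{1,\beta}(\overline{\Omega}) : f|_{\partial\Omega} = 0\} = X_1$.

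\textbf{Step 2: transfer the interpolation problem.} Real interpolation commutes with isomorphisms of couples. Since $A^{-1}$ maps $X_0$ onto $X_1$ and $X_1$ onto $Y_1$, we get
\[
(X_0,X_1)_{\theta,\infty} = A\bigl((X_1,Y_1)_{\theta,\infty}\bigr).
\]
So it suffices to prove
\[
(X_1,Y_1)_{\theta,\infty} = \{u \in C^{1,2\theta+\beta}(\overline{\Omega}) : u|_{\partial\Omega} = 0\}
\]
for $2\theta + \beta < 1$. For larger $\theta$, the extra requirement $\Delta u|_{\partial\Omega} = 0$ is added whenever $1 + 2\theta + \beta > 2$, with the usual convention $C^{k,s} = C^{k+1,s-1}$ if $s > 1$. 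Applying $A = -\Delta$ then gives the three cases. In the first case $\Delta u$ is a $C^{-1,2\theta+\beta}$ distribution. In the second, $\Delta u \in C^{0,2\theta+\beta-1}$ and it vanishes on $\partial\Omega$. In the third, $\Delta u \in C^{1,2\theta+\beta-2}$ and it vanishes on $\partial\Omega$. Surjectivity onto each listed space follows from the Schauder isomorphisms at the corresponding regularity levels.

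\textbf{Step 3: interpolation between $X_1$ and $Y_1$.} Without boundary conditions, the identity $(C^{1,\beta}, C^{3,\beta})_{\theta,\infty} = C^{1+2\theta+\beta}$ (H\"{o}lder--Zygmund, non-integer exponents) is classical. I would obtain it on $\overline{\Omega}$ through a common extension operator $C^{k,\beta}(\overline{\Omega}) \to C^{k,\beta}(\R^n)$, $k \le 3$, which serves as a coretraction, with restriction as the retraction. This gives the inclusion "$\subset$", with the boundary conditions preserved because they are closed conditions stable under the $K$-functional limits whenever the corresponding trace makes sense.

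\textbf{Main obstacle: the reverse inclusion with boundary conditions.} For the reverse inclusion I would localize with a partition of unity and flatten the boundary with $C^{3,\beta}$ charts; this is exactly where $\partial\Omega \in C^{3,\beta}$ is needed. In the half-space, odd reflection in $x_n$ maps functions with $u = 0$ and $\Delta u = 0$ on $\{x_n = 0\}$ to $C^{3,\beta}(\R^n)$ functions, so the problem reduces to the whole-space case. The difficulty is that after flattening, the Laplacian becomes a variable-coefficient operator, so odd reflection no longer exactly preserves the second condition. I would handle this by the standard perturbation argument, absorbing lower-order terms and commutators with cut-offs. The other hard point is checking that no extra condition appears at the critical values $\theta = \frac{1-\beta}{2}$ and $\theta = 1 - \frac{\beta}{2}$. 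These values are excluded in the statement, and away from them the trace condition is either meaningless or automatically inherited.
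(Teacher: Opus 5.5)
The paper gives no proof of this statement; it simply quotes \cite[Theorem~6.1]{Ve}.

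\textbf{What is correct.} Your Steps~1--2 are correct. So is the inclusion ``$\subset$'' in Step~3. The trace $\Delta u|_{\partial\Omega}=0$ passes to the limit because the $Y_1$-parts of the decompositions converge in $(X_1,Y_1)_{\theta',1}\hookrightarrow C^{1+2\theta'+\beta}(\overline{\Omega})$ for $\theta'<\theta$. That half already gives everything the paper uses, namely $(X_0,X_1)_{\theta,\infty}\hookrightarrow L^2(\Omega)$ for $2\theta+\beta>1$.

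\textbf{The gap.} The genuine gap is the reverse inclusion, which is the actual content of the theorem. It affects all three cases, not only those where $\Delta u|_{\partial\Omega}=0$ appears in the answer. To show $u\in(X_1,Y_1)_{\theta,\infty}$ you must exhibit $w_t\in Y_1$ with $\|u-w_t\|_{C^{1,\beta}}\lesssim t^{\theta}$ and $\|w_t\|_{C^{3,\beta}}\lesssim t^{\theta-1}$. So every approximant must satisfy $\Delta w_t=0$ on $\partial\Omega$ exactly, and none of the operations in your sketch preserves this condition.
\begin{itemize}
\item For a cut-off $\psi$ and $w\in Y_1$, one has $\Delta(\psi w)=2\,\partial_\nu\psi\,\partial_\nu w$ on $\partial\Omega$.
\item After flattening by a general $C^{3,\beta}$ chart, write the transformed Laplacian as $a_{ij}\partial_i\partial_j+c_i\partial_i$. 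The condition becomes $a_{nn}\partial_n^2v+2\sum_{i<n}a_{in}\partial_i\partial_nv+c_n\partial_nv=0$ on $\{x_n=0\}$.
\item Odd reflection followed by mollification only produces $v=\partial_n^2v=0$ there. The defect $2\sum_{i<n}a_{in}\partial_i\partial_nv$ is second order, the same order as the quantity being prescribed, so it is not a lower-order term.
\end{itemize}
Moreover, a boundary condition cannot be ``absorbed'' by a perturbation argument at all. Replacing $\Delta$ by $\Delta+c\cdot\nabla$ changes $Y_1$, and for $2\theta+\beta>1$ it changes the trace condition in the interpolation space to $\Delta u+(c\cdot\nu)\partial_\nu u=0$. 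So the approximants must satisfy the exact condition, and you have to construct them.

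\textbf{Two ways to close it.}

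(a) Direct construction.
\begin{itemize}
\item Work in charts whose $x_n$-direction is normal to $\partial\Omega$ on the boundary, built from a regularized normal field so they stay $C^{3,\beta}$. Then $a_{in}=0$ on $\{x_n=0\}$ for $i<n$.
\item Use cut-offs with $\partial_\nu\psi=0$ on $\partial\Omega$.
\item Add to the mollified odd reflection $w^0_t$ an explicit corrector $\tfrac12x_n^2\chi(x_n/\sqrt t)\,h_t$, where $h_t$ extends $-(c_n/a_{nn})\,\partial_nw^0_t|_{x_n=0}$.
\item Check that the corrected approximant still satisfies both scale-dependent bounds above.
\end{itemize}

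(b) The cleaner route avoids constructing elements of $Y_1$ by hand.
\begin{itemize}
\item Let $A_\infty$ be the Dirichlet Laplacian in $C(\overline{\Omega})$. Use the classical characterization $D_{A_\infty}(\lambda,\infty)=\{u\in C^{2\lambda}(\overline{\Omega}):u|_{\partial\Omega}=0\}$ for $2\lambda\in(0,2)\setminus\{1\}$ (cf.\ \cite{Lu0}).
\item This gives $X_1=D_{A_\infty}(\frac{1+\beta}{2},\infty)$. Schauder theory gives $Y_1=\{u\in D(A_\infty):A_\infty u\in X_1\}=D_{A_\infty}(\frac{3+\beta}{2},\infty)$.
\item Reiteration yields $(X_1,Y_1)_{\theta,\infty}=D_{A_\infty}(\frac{1+\beta}{2}+\theta,\infty)$. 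The condition $\Delta u|_{\partial\Omega}=0$ then comes out automatically from $D_A(1+\sigma,\infty)=\{u\in D(A):Au\in D_A(\sigma,\infty)\}$, and your Step~2 finishes the three cases.
\end{itemize}
In (b) the only boundary condition ever handled by localization is the Dirichlet trace, which is invariant under cut-offs and changes of variables. The excluded values of $\theta$ are exactly those with $\frac{1+\beta}{2}+\theta\in\{1,\frac32\}$.
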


\subsection{Proof of Theorem\,\ref{thm:ftpr}}
We now prove Theorem\,\ref{thm:ftpr}.
We use the same notation as in Subsection\,3.1
\begin{proof}[PROOF of Theoerm\,\ref{thm:ftpr}]
We take $u_0 \in C^{1,\gamma}(\overline{\Omega})$ such that $u_0|_{\partial \Omega} = g$.
And using Theorem\,\ref{thm:interpolation}, we can choose $\theta \in (0,1)$ such that $(X_0,X_1)_{\theta,\infty}$ is continuously embedded in $L^2(\Omega)$.
Set $v_0 \coloneq u_0 - g$, then we have $v_0 \in X_1$ and $F(v_0) \in C^{-1,\min\{\alpha,\gamma\}}(\overline{\Omega}) \subset \overline{X_1}^{X_0}$ by Lemma \ref{lem:density}.
Therefore, thanks to Proposition\,\ref{prop:lwp}, 
we can find $T = T(v_0) > 0$ and $v \in C^1([0,T];X_0) \cap C([0,T];X_1)$, which satisfies \eqref{eq:main_abst} in $[0,T]$.
Moreover, we have $C^{\theta}_{\theta}((0,T];X_1)$ and $v' \in B_{\theta}((0,T];(X_0,X_1)_{\theta,\infty})$.
In particular, it follows that $v' \in L^{\infty}_{loc}((0,T];L^2(\Omega))$.

Let us define $u \coloneq v + g$.
We observe that $u(t) \in C^{1,\beta}(\overline{\Omega}) \subset X$ for any $t \in [0,T]$ by Remark\,\ref{rem:inclusion}.
Moreover, one can see that $u$ is $L^2$-solution of \eqref{eq:main} with the initial value $u_0$.
Indeed, from the regularity of $v$, it follows that 
$u \in C([0,T];L^2(\Omega)) \cap W^{1,2}_{loc}((0,T);L^2(\Omega))$.
Moreover, we have 
\begin{align}
  \int_{\Omega} \frac{du}{dt}\xi
  +\int_{\Omega}f(x,\nabla u) \cdot \nabla \xi = 0
  \quad
  \text{for any\,}
  \xi \in W^{1,2}_0(\Omega)
  \text{\,and\,a.e.\,}
  t \in (0,T).
\end{align}
If $p < 2$, owing to the spatial regularity of $u$,
the class of admissible test functions can be extended to $W^{1,p}_0(\Omega) \cap L^2(\Omega)$.
Therefore, we have for a.e.\,$t \in (0,T)$, $u(t) \in D(\partial \widetilde{\varphi})$ and $u$ satisfies $\frac{du}{dt}(t) + \partial{\widetilde{\varphi}}(u(t)) \ni 0$ by Lemma\,\ref{lem:charc_subdiff_domain}.
Then we deduce that $u$ is a $L^2$-solution in $[0,T]$.
From the uniqueness of $L^2$-solution (see Remark\,\ref{rem:contraction}\,(iii)), 
it follows $S(t)u_0 = u(t) \in X$ for any \,$t \in [0,T]$.
The proof is completed.

\end{proof}

\section*{Data Availability Statement}
No datasets were generated or analysed during the current study.
\section*{Conflict of interest}
The author has no relevant financial or non-financial interests to disclose.
  


\begin{thebibliography}{9}
  \bibitem{AkM}
  G. Akagi and K. Matsuura, Well-posedness and large-time behaviors of solutions for a parabolic equation involving $p(x)$-Laplacian, Discrete Contin. Dyn. Syst. {\bf 2011}, Dynamical systems, differential equations and applications. 8th AIMS Conference. Suppl. Vol. I, 22--31; MR3012130
  \bibitem{Am1}
  H. Amann, Nonhomogeneous linear and quasilinear elliptic and parabolic boundary value problems, in {\it Function spaces, differential operators and nonlinear analysis (Friedrichroda, 1992)}, 9--126, Teubner-Texte Math., 133, Teubner, Stuttgart; MR1242579
  \bibitem{AmGS}
  L. Ambrosio, N. Gigli and G. Savar\'e, {\it Gradient flows in metric spaces and in the space of probability measures}, second edition, 
Lectures in Mathematics ETH Z\"urich, Birkh\"auser, Basel, 2008; MR2401600
  \bibitem{BaDS}
  A.~K. Balci, L. Diening and M.~D. Surnachev, New examples on Lavrentiev gap using fractals, Calc. Var. Partial Differential Equations {\bf 59} (2020), no.~5, Paper No. 180, 34 pp.; MR4153906
  \bibitem{BaDS_1}
A.~K. Balci, L. Diening and M.~D. Surnachev, Scalar minimizers with maximal singular sets and lack of Meyers property, in {\it Friends in partial differential equations---the Nina N. Uraltseva 90th anniversary volume}, 1--43, EMS Press, Berlin,; MR4967630
 \bibitem{BaCM}
P. Baroni, M. Colombo and G. Mingione, Regularity for general functionals with double phase, Calc. Var. Partial Differential Equations {\bf 57} (2018), no.~2, Paper No. 62, 48 pp.; MR3775180
  \bibitem{Br_0}
  H. Brezis, {\it Op\'erateurs maximaux monotones et semi-groupes de contractions dans les espaces de Hilbert}, North-Holland Mathematics Studies Notas de Matem\'atica, No. 5 No. 50, North-Holland, Amsterdam-London, 1973 American Elsevier Publishing Co., Inc., New York, 1973; MR0348562
  \bibitem{Br}
  H. Brezis, {\it Functional analysis, Sobolev spaces and partial differential equations}, Universitext, Springer, New York, 2011; MR2759829
  \bibitem{CoM1}
  M. Colombo and G. Mingione, Regularity for double phase variational problems, Arch. Ration. Mech. Anal. {\bf 215} (2015), no.~2, 443--496; MR3294408
  \bibitem{CoM2}
  M. Colombo and G. Mingione, Bounded minimisers of double phase variational integrals, Arch. Ration. Mech. Anal. {\bf 218} (2015), no.~1, 219--273; MR3360738
  \bibitem{CoM3}
  M. Colombo and G. Mingione, Calder\'on-Zygmund estimates and non-uniformly elliptic operators, J. Funct. Anal. {\bf 270} (2016), no.~4, 1416--1478; MR3447716
  \bibitem{DeF}
  C. De~Filippis, Gradient bounds for solutions to irregular parabolic equations with $(p, q)$-growth, Calc. Var. Partial Differential Equations {\bf 59} (2020), no.~5, Paper No. 171, 32 pp.; MR4150873
  \bibitem{DeFM0}
  C. De~Filippis and G. Mingione, St. Petersburg Math. J. {\bf 31} (2020), no.~3, 455--477; translated from Algebra i Analiz {\bf 31} (2019), no.~3, 82--115; MR3985927
  \bibitem{EsLM}
  L. Esposito, F. Leonetti and G. Mingione, Sharp regularity for functionals with $(p,q)$ growth, J. Differential Equations {\bf 204} (2004), no.~1, 5--55; MR2076158
  \bibitem{FoMM}
  I. Fonseca, J. Mal\'y{} and G. Mingione, Scalar minimizers with fractal singular sets, Arch. Ration. Mech. Anal. {\bf 172} (2004), no.~2, 295--307; MR2058167
  \bibitem{GT}
  D. Gilbarg and N.~S. Trudinger, {\it Elliptic partial differential equations of second order}, reprint of the 1998 edition, 
Classics in Mathematics, Springer, Berlin, 2001; MR1814364
\bibitem{La}
M.~A. Lavrent\cprime ev, Sur quelques probl\`emes du calcul des variations, Ann. Mat. Pura Appl. {\bf 4} (1927), no.~1, 7--28; MR1553097
  \bibitem{HaH}
  P. Harjulehto and P. H\"ast\"o, {\it Orlicz spaces and generalized Orlicz spaces}, Lecture Notes in Mathematics, 2236, Springer, Cham, 2019; MR3931352
  \bibitem{Ki2}
  W. Kim, Calder\'on-Zygmund type estimate for the singular parabolic double-phase system, J. Math. Anal. Appl. {\bf 551} (2025), no.~1, Paper No. 129593, 33 pp.; MR4897661
  \bibitem{KiKM}
  W. Kim, J. Kinnunen and K. Moring, Gradient higher integrability for degenerate parabolic double-phase systems, Arch. Ration. Mech. Anal. {\bf 247} (2023), no.~5, Paper No. 79, 46 pp.; MR4627284
  \bibitem{KiS}
  W. Kim and L. S\"arki\"o, Gradient higher integrability for singular parabolic double-phase systems, NoDEA Nonlinear Differential Equations Appl. {\bf 31} (2024), no.~3, Paper No. 40, 38 pp.; MR4718687
  \bibitem{KiLV}
  J. Kinnunen, J. Lehrb\"ack and A.~V. V\"ah\"akangas, {\it Maximal function methods for Sobolev spaces}, Mathematical Surveys and Monographs, 257, Amer. Math. Soc., Providence, RI, 2021; MR4306765
  \bibitem{LaSU}
  O.~A. Ladyzhenskaya, V.~A. Solonnikov and N.~N. Ural{\cprime}tseva, {\it Linear and quasilinear equations of parabolic type}, translated from the Russian by S. Smith, 
Translations of Mathematical Monographs, Vol. 23, Amer. Math. Soc., Providence, RI, 1968; MR0241822
\bibitem{Lu-1}
A. Lunardi, Abstract quasilinear parabolic equations, Math. Ann. {\bf 267} (1984), no.~3, 395--415; MR0738260
\bibitem{Lu0}
A. Lunardi, {\it Analytic semigroups and optimal regularity in parabolic problems},
Modern Birkh\"auser Classics, Birkh\"auser/Springer Basel AG, Basel, 1995; MR3012216
\bibitem{MaW}
B.-V. Matioc and C. Walker, On the principle of linearized stability in interpolation spaces for quasilinear evolution equations, Monatsh. Math. {\bf 191} (2020), no.~3, 615--634; MR4064570
\bibitem{MoMO}
K. Mohanta, C. Mudarra and T. Oikari, Traces of vanishing H\"older spaces, J. Geom. Anal. {\bf 35} (2025), no.~1, Paper No. 34, 26 pp.; MR4837231
  \bibitem{PrS}
  J. Pr\"uss and G. Simonett, {\it Moving interfaces and quasilinear parabolic evolution equations}, Monographs in Mathematics, 105, Birkh\"auser/Springer, Cham, 2016; MR3524106
  \bibitem{Si}
  T. Singer, Existence of weak solutions of parabolic systems with $p, q$-growth, Manuscripta Math. {\bf 151} (2016), no.~1-2, 87--112; MR3532237
  \bibitem{Ve}
  V. Vespri, The functional space $C^{-1,\alpha}$ and analytic semigroups, Differential Integral Equations {\bf 1} (1988), no.~4, 473--493; MR0945822
  \bibitem{Zh}
  V.~V. Zhikov, On Lavrentiev's phenomenon, Russian J. Math. Phys. {\bf 3} (1995), no.~2, 249--269; MR1350506
  \end{thebibliography}
\end{document}